\def\@typesizes{%
       \or{5}{6.5}\or{6}{7.5}\or{7}{8.5}\or{8}{11}\or{9}{12}%
       \or{10}{13}
       \or{\@xipt}{14}\or{\@xiipt}{15}\or{\@xivpt}{18}%
       \or{\@xviipt}{20}\or{\@xxpt}{24}}
\numberwithin{equation}{section}
\newtheorem{theorem}{Theorem}[section]
\newtheorem{corollary}[theorem]{Corollary}
\newtheorem{definition}[theorem]{Definition}
\newcommand{\nonprint}[1]{}
\begin{document}

\title[Parameter-dependent boundary-value problems]{Parameter-dependent inhomogeneous boundary-value problems in Sobolev spaces}

\author{Olena Atlasiuk}
\address{
University of Helsinki, Department of Mathematics and Statistics, P.O. Box 68, Pietari Kalmin katu 5, 00014 Helsinki, Finland and \\
Institute of Mathematics of the National Academy of Sciences of Ukraine, st. Tereschenkivska 3, 01024 Kyiv, Ukraine }

\email{olena.atlasiuk@helsinki.fi}

\author{Vladimir Mikhailets}
 \address{King's College London, Strand, WC2R 2LS London, UK and Institute of Mathematics of the National Academy of Sciences of Ukraine, st. Tereschenkivska 3, 01024 Kyiv, Ukraine (mikhailets@imath.kiev.ua}

\email{mikhailets@imath.kiev.ua}

\author{Jari Taskinen}
\address{University of Helsinki, Department of Mathematics and Statistics, P.O. Box 68, Pietari Kalmin katu 5, 00014 Helsinki, Finland}

\email{jari.taskinen@helsinki.fi}



\begin{abstract}
We study a wide class of linear inhomogeneous boundary-value problems for $r$th order ODE-systems depending on a
parameter $\mu$ in a general metric space $\mathcal M$. The solutions belong to
the Sobolev spaces $(W^{n+r}_p)^m$, $n\in\mathbb{N}\cup\{0\}$, $m, r \in \mathbb{N}$, $1\leq p\leq \infty$.
The boundary conditions are of a most general form $By=c$, where $B$ is an arbitrary continuous operator
from $(W^{n+r}_p)^m$ to $\mathbb{C}^{rm}$. They  may thus contain derivatives of the unknown vector
function of integer and/or fractional orders $\geq r$. We find necessary and
sufficient conditions for the continuity of solutions with respect to the parameter $\mu$. We also prove
that the solutions of the original problems can be approximated in the space $(W^{n+r}_p)^m$ by
solutions of ODE-systems with polynomial coefficients and multipoint boundary conditions,
which do not depend on the right-hand sides of the original problem.
\end{abstract}

\maketitle

\textbf{Keywords:} differential system; boundary-value problem; Sobolev space; continuity in parameter, generic boundary conditions, multipoint boundary conditions.

2020 Mathematics Subject Classification: 34B05, 34B08, 34B10, 47A53
1

\maketitle

\tableofcontents

\section{Introduction}\label{Sec.1}

One of the central, contemporary questions in the theory of systems of ordinary differential equations
concerns parameter-dependent systems and, in particular, 
the continuous dependence on the parameter or the limit behavior of the solutions. As for classical,
fundamental results on the continuous parameter dependence, we mention the results of  I.I. Gikhman \cite{Gikhman}, M.A. Krasnoselskii and S.G. Krein \cite{KrasnKrein}, J. Kurzweil and Z. Vorel \cite{KurzweilVorel} concerning
for the Cauchy problem for nonlinear differential systems. For linear systems, these results were improved and generalized  by A.Yu. Levin \cite{Levin}, Z. Opial \cite{Opial}, W.T. Reid \cite{Reid}, T.K. Nguen \cite{Nguen}, and more recently, by the second named author and his collaborators 
\cite{MikhMurachSol2016, AtlasiukMikhailets20191, AtlasiukMikhailets20192, NovAM2023, AtlasiukMikhailets2024, KodliukMikhailets2013}.

Unlike solutions to the Cauchy problem, solutions to boundary-value problems may not exist or may not be
unique. The questions about the solvability of linear boundary-value problems and,  in particular, their
Fredholm  indices and  $d$-characteristics in various function spaces were studied in the papers \cite{NovAM2023, Pelekhata18}.
Parameter dependence of boundary-value problems have attained  much less attention than the corresponding
question for the Cauchy problem. For a long time, limit theorems for the parameter dependence  were only
established for inhomogeneous boundary-value conditions in 
boundary-value problems
\begin{equation}
Ly=f, \quad By=c,   \label{1.2}
\end{equation}
where $L$ is a system of $m$ linear differential equations of order $r \in \mathbb{N}
= \{1,2,3, \ldots\}$ with summable coefficients, and $B$ is a linear continuous finite-dimensional operator
$$B \colon (W^r_1)^m \rightarrow \mathbb{C}^{rm}.$$
In this case, the general boundary conditions may only contain derivatives of order $\leq r-1$
of the unknown function  \cite{Krall,Iha,Chua,Paukstaite,Providas,Goodrich,Whyburn,Smogorzhevsky,Krein82,Skubachevskii,Ma,Adomian,Krall68}.
Sufficient conditions for the continuity of solutions in the case $r=1$ were proved in the articles of I.T. Kiguradze \cite{Puza, Kigyradze2003,Kigyradze1975,Kigyradze1987}, and M. Ashordia \cite{Ashordia97, Ashordia}. These results were improved and generalized to systems of differential equations of order $r\geq2$ in the series of works \cite{Pelekhata19,KodliukMikhailetsReva2013}.

In some applications and problems of optimization theory, there naturally arise problems with boundary conditions that contain higher derivatives than the order of the differential equation. If the coefficients of the system of order $r \in \mathbb{N}$ and its right-hand side even belong to the Sobolev space
$(W_p^n)^{m \times m}$, $1\leq p\leq \infty$, $n\in \mathbb{N}$ instead of just being in $L_p$, then
all solutions of the differential system have additional smoothness and belong to the Sobolev space $(W_p^{n+r})^m$. In this case, additional "boundary" \, conditions should be added:
indeed, we introduce new \textit{generic} inhomogeneous boundary conditions in Sobolev spaces
as  the operator equation $By=c$, where
\begin{equation*}
B\colon (W^{n+r}_p)^m\rightarrow\mathbb{C}^{\ell},
\end{equation*}
is a the continuous operator and $\ell$ is the number of linearly independent scalar boundary conditions.
The resulting conditions can be either overdetermined or underdetermined.
This new direction of research 
has been developed in a series of works by the second named author and his co-authors 
\cite{HRYPMikhailetsMurach2017,HRYPKodliukMikhailets2015,HRYP}. The applications include multipoint
boundary-value problems \cite{Ashordia98, Ashordia05, Atlasiuk2020, Atlasiuk20201}, as well as problems of the spectral theory of differential operators with singular
coefficients \cite{Horyunov}. Note that for such boundary-value problems, the formal conjugate problem and
Lagrange formula are not defined, therefore, novel approaches and methods are required. 
Generic boundary-value problems have also been studied in other spaces of differentiable functions
\cite{Masliuk18, Masliuk17, Sold2015, Masliuk,Murach21}. It is worth
noting that even in the case of $n=0$, $p=1$ the class of generic boundary-value problems is wider than the
class of problems \eqref{1.2}.

The article is organized as follows.
Section~\ref{Sec.2} contains the formulation and proofs of  general results on the continuous
parameter dependence, and in Section~\ref{Sec.3} we prove limit theorems for the operators appearing
in the boundary-value problems. Also, in Theorem \ref{opBinSobolev} 
we find a concrete representation for the abstract boundary
operator of the generic boundary-value problem, which leads to more detailed formulation of the
results of Section~\ref{Sec.2}. Section~\ref{Sec.4} contains applications of the main results, namely,
Theorem  \ref{thapro} on the approximation of solutions of an arbitrary boundary-value problem by solutions
of a system  with polynomial coefficients and multipoint boundary conditions.

\section{General results on the continuous parameter dependence}\label{Sec.2}

Let $(a,b)\subset\mathbb{R}$ be a finite interval and let the parameters 
$$
n\in\mathbb{N}\cup\{0\},\quad\{m, r, k\}\subset\mathbb{N},\quad \quad 1\leq p\leq \infty.
$$
be arbitrary. We denote by 
\begin{align*}
&W_p^{n+r}\bigl([a,b];\mathbb{C})\\
&:= \bigl\{y\in C^{n+r-1}([a,b];\mathbb{C})\colon y^{(n+r-1)}\in AC[a,b], \, y^{(n+r)}\in L_p[a,b]\bigr\}
\end{align*}
the usual  complex Sobolev space and we set $W_p^0:=L_p$. This space is Banach with respect to the norm
$$
\bigl\|y\bigr\|_{n+r,p}=\sum_{s=0}^{n+r}\bigl\|y^{(s)}\bigr\|_{p},
$$
where $\|\cdot\|_p$ stands for the norm in the Lebesgue space $L_p\bigl([a,b]; \mathbb{C}\bigr)$. We
will need the Sobolev spaces
$$
(W_p^{n+r})^{m}:=W_p^{n+r}\bigl([a,b];\mathbb{C}^{m}\bigr)
\;\;\mbox{and}\;\;
(W_p^{n+r})^{m\times m}:=W_p^{n+r}\bigl([a,b];\mathbb{C}^{m\times m}\bigr),
$$
which consist, respectively, of vector- and matrix-valued functions with ele\-ments belonging to $W_p^{n+r}$.
The norms in these spaces are defined to be the sums of the Sobolev-norms of the components, and the
same notation $\|\cdot\|_{n+r,p}$ is used in all cases, which will be clear from the context.
The same convention will be applied to all other Banach spaces. Recall that, if $p<\infty$, the
Sobolev spaces in question are separable and have a Schauder basis.

Let $\mathcal{M}$ be an arbitrary metric space. In the sequel, $\mu \in \mathcal{M}$ will denote a free
parameter whereas $\mu_0 \in \mathcal{M}$ denotes an arbitrarily  fixed one.
We consider the following linear boundary-value problem for an unknown
vector-valued function $y(\cdot,\mu)\in (W_{p}^{n+ r})^m$,
\begin{equation}\label{bound_z1}
\left(L(\mu)y(\mu)\right)(t):=y^{(r)}(t,\mu) + \sum\limits_{\ell=1}^rA_{r-\ell}(t,\mu)y^{(r-\ell)}(t,\mu)=f(t,\mu), \quad t\in(a,b),
\end{equation}
\begin{equation} \label{bound_z2} 
B(\mu)y(\mu)=c(\mu),
\end{equation}
where matrix-valued functions $A_{r-l}(\cdot,\mu) \in (W_p^n)^{m\times m}$, a vector-valued function
$f(\cdot,\mu)$ $\in (W^n_p) ^m$, a vector $c(\mu) \in \mathbb{C}^{rm}$ and a 
continuous linear operator
\begin{equation}\label{oper_B(e)v} 
B(\mu)\colon (W^{n+r}_p)^m\rightarrow\mathbb{C}^{rm}
\end{equation}
are given and arbitrary.
The boundary condition \eqref{bound_z2} consists of $rm$ scalar conditions for a system of $m$ differential
equations of $r$-th order. In the sequel, we will use the rules of standard matrix algebra, thus,
vectors and vector-valued functions will be presented as columns.

A solution to problem \eqref{bound_z1}, \eqref{bound_z2} is understood as a vector-valued function
$y \in (W_{p}^{n+r})^m$ which satisfies both equation \eqref{bound_z1} (everywhere if $n\geq 1$, and
almost everywhere if $n=0$) on $(a,b)$ and equality \eqref{bound_z2}.
As explained above, we  call the general boundary condition \eqref{bound_z2} with an arbitrary continuous
operator \eqref{oper_B(e)v} as \textit{generic} for the differential system \eqref{bound_z1}. It covers
all classical types of boundary conditions, such as initial conditions in the Cauchy problem, various
multipoint conditions, integral conditions,  mixed boundary conditions, as well as non-classical conditions
containing fractional derivatives, where the order of the derivatives may exceed the order of the
differential equation. Finally, we do not pose any {\it a priori} assumption on the regularity of
the matrix-value functions $A_{r-l}(t,\mu)$ with respect to $\mu$.

We write problems \eqref{bound_z1}, \eqref{bound_z2} in the form of a linear operator equation
\[ \big(L(\mu),B(\mu)\big)y(\mu)=\big(f(\mu),c(\mu)\big), \]
where $\big(L(\mu),B(\mu)\big)$ is the family of continuous linear operators
\begin{equation}\label{(L,B)vp}
\big(L(\mu),B(\mu)\big) \colon (W^{n+r}_p)^m\to (W^{n}_p)^m\times\mathbb{C} ^{rm}.
\end{equation}

Let $E_{1}$ and $E_{2}$ be Banach spaces. A linear bounded operator $T\colon E_{1}\rightarrow E_{2}$ is called a Fredholm operator if its kernel and co-kernel are finite-dimen\-si\-onal. If $T$ is a Fredholm operator, then its range $T(E_{1})$ is closed in $E_{2}$, and its index
$$
\mathrm{ind}\,T:=\dim\ker T-\dim\big(E_{2}/T(E_{1})\big)\in \mathbb{Z} = \{0, \pm 1, \pm2 , \ldots \}
$$
is finite (see, e.g., \cite[Lemma~19.1.1]{Hermander1985}).
According to \cite[Theorem 1]{NovAM2023}, all operators in the family  \eqref{(L,B)vp} are Fredholm
with index zero for every  $\mu$.

\begin{definition}\label{defin_vp}
We say that a solution to the boundary-value problem \eqref{bound_z1}, \eqref{bound_z2} depends continuously
on the parameter $\mu$ at a limit point $\mu_0$ of the metric space $\mathcal{M}$,
if the following two conditions are satisfied:
\begin{itemize}
\item[$(\ast)$] There exists a positive number $\varepsilon$ such that, for all $\mu\in \mathcal{B}(\mu_0, \varepsilon)$ and arbitrary right-hand sides $f(\cdot;\mu)\in (W^{n}_p)^{m}$ and $c(\mu)\in\mathbb{C}^{rm}$, the problem has a unique solution $y(\cdot;\mu)$ in the space  $(W^{n+r}_p)^{m}$;
\item [$(\ast\ast)$] The convergence of the right-hand sides $f(\cdot;\mu)\to f(\cdot;\mu_0)$ in $(W_p^n)^{m}$ and $c(\mu)\to c(\mu_0)$ in $\mathbb{C}^{rm}$ as $\mu\to\mu_0$ implies the convergence of the solutions
\begin{equation*}\label{4.guv}
y(\cdot,\mu)\to y(\cdot,\mu_0)\quad\mbox{in}\quad (W^{n+r}_p)^{m} \quad\mbox{as}\quad\mu\to\mu_0.
\end{equation*}
\end{itemize}
\end{definition}

Throughout this article, we will assume that the following condition (0) for the point
$\mu_0 \in \mathcal{M}$ is fulfilled.

\medskip

\noindent \textbf{Condition (0)}. {\it The homogeneous boundary-value problem
\begin{equation*}
L(\mu_0)y(t,\mu_0)=0,\  t\in (a,b), \ \quad B(\mu_0)y(\cdot,\mu_0)=0
\end{equation*}
has only a trivial solution. }

\medskip

\noindent We will also consider the following two conditions on the left-hand sides of the problem
\eqref{bound_z1}, \eqref{bound_z2}.

\medskip

\noindent \textbf{Limit Conditions as $\mu\to\mu_0$}:

\smallskip

\noindent
(I) {\it $A_{r-\ell}(\cdot;\mu)\to A_{r-\ell}(\cdot;\mu_0)$ in the space $(W^{n}_p)^{m\times m}$ for every  $\ell\in\{1,\ldots, r\}$;}

\smallskip

\noindent
(II) {\it $B(\mu)y\to B(\mu_0)y$ in the space $\mathbb{C}^{m}$ for all $y\in(W^{n+r}_p)^m$.
}

\medskip
Now, we can formulate necessary and sufficient conditions for the continuity of the solutions to
the boundary-value problem \eqref{bound_z1}, \eqref{bound_z2} with respect to an abstract parameter.

\begin{theorem}\label{nep v}
The solution to the boundary-value problem \eqref{bound_z1}, \eqref{bound_z2} depends continuously on the parameter~$\mu$ at $\mu_0\in \mathcal{M}$ if and only if this problem satisfies Condition \textup{(0)} and Limit Conditions~\textup{(I)} and~\textup{(II)}.
\end{theorem}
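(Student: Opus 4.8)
The plan is to recast the problem as a single operator equation and exploit the Fredholm structure already available. Write $X=(W^{n+r}_p)^m$, $Y=(W^n_p)^m\times\mathbb{C}^{rm}$ and $T(\mu)=(L(\mu),B(\mu))\colon X\to Y$; by the result cited after \eqref{(L,B)vp} each $T(\mu)$ is Fredholm of index zero, so Condition (0) is equivalent to $\ker T(\mu_0)=\{0\}$, hence to the bijectivity and, by the open mapping theorem, the bounded invertibility of $T(\mu_0)$. Since $\mathcal{M}$ is a metric space, every assertion of the form ``as $\mu\to\mu_0$'' may be tested along sequences $\mu_k\to\mu_0$, and I work sequentially throughout. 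The decisive structural observation is that $T(\mu)-T(\mu_0)$ splits as $D(\mu)+C(\mu)$, where $D(\mu)y=\sum_\ell(A_{r-\ell}(\cdot,\mu)-A_{r-\ell}(\cdot,\mu_0))y^{(r-\ell)}$ lands in $(W^n_p)^m$ while $C(\mu)=(0,B(\mu)-B(\mu_0))$ has finite rank $\le rm$. Using that $W^n_p$ is a Banach algebra for $n\ge1$ (and the embedding $W^\ell_p\hookrightarrow L_\infty$, $\ell\ge1$, for $n=0$), Limit Condition (I) yields the \emph{strong} statement $\|D(\mu)\|_{X\to(W^n_p)^m}\to0$ in operator norm, whereas Limit Condition (II) only gives $C(\mu)\to0$ in the strong operator topology.

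For sufficiency I first absorb the differential part: since $\|D(\mu_k)\|\to0$, a Neumann series shows $R(\mu_k):=T(\mu_0)+D(\mu_k)$ is invertible for large $k$ with $R(\mu_k)^{-1}\to T(\mu_0)^{-1}$ in operator norm. Writing $T(\mu_k)=R(\mu_k)(I+F(\mu_k))$ with $F(\mu_k)=R(\mu_k)^{-1}C(\mu_k)$ a finite-rank operator $F(\mu_k)y=\sum_{j}\beta_j(\mu_k)(y)\,v_j(\mu_k)$, I reduce the invertibility of $T(\mu_k)$ to that of the $rm\times rm$ matrix $M(\mu_k)=[\,\delta_{ij}+\beta_i(\mu_k)(v_j(\mu_k))\,]$. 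Here $\beta_j(\mu_k)\to0$ weak-$\ast$ by (II) and, by the Banach--Steinhaus theorem applied to the sequence, $\sup_k\|\beta_j(\mu_k)\|<\infty$; crucially $v_j(\mu_k)=R(\mu_k)^{-1}(0,e_j)\to T(\mu_0)^{-1}(0,e_j)$ in the norm of $X$, because $R(\mu_k)^{-1}$ converges in operator norm. Hence $\beta_i(\mu_k)(v_j(\mu_k))\to0$, so $M(\mu_k)\to I$, giving invertibility of $T(\mu_k)$, which yields $(\ast)$. Finally, solving the associated finite linear system for the scalars $\beta_j(\mu_k)(y(\mu_k))$ and using once more that a weak-$\ast$-null bounded sequence of functionals, evaluated on a norm-convergent sequence, tends to zero, I obtain $y(\mu_k)\to y(\mu_0)$, which is $(\ast\ast)$.

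For necessity, Condition (0) is immediate from $(\ast)$ at $\mu=\mu_0$. Feeding constant data $(f,c)$ into $(\ast\ast)$ shows $T(\mu)^{-1}\to T(\mu_0)^{-1}$ strongly, and Banach--Steinhaus (sequentially) gives a uniform bound $\|T(\mu)^{-1}\|\le C_0$ near $\mu_0$. To recover (I) \emph{without} any a priori bound on $\|A_{r-\ell}(\cdot,\mu)\|$, I choose constant data $T(\mu_0)\mathcal{Y}_0$ realizing at $\mu_0$ a block of prescribed solutions whose generalized Wronskian matrix $\mathcal{Y}_0$ is invertible; the corresponding solutions satisfy $\mathcal{Y}(\mu)\to\mathcal{Y}_0$ in $X$, hence $\mathcal{Y}(\mu)^{(r)}\to\mathcal{Y}_0^{(r)}$ and $\sum_\ell A_{r-\ell}(\cdot,\mu)\mathcal{Y}(\mu)^{(r-\ell)}\to\sum_\ell A_{r-\ell}(\cdot,\mu_0)\mathcal{Y}_0^{(r-\ell)}$ in $(W^n_p)^{m\times m}$; multiplying by the inverse $\mathcal{Y}(\mu)^{-1}$, which converges in the relevant Sobolev norm, isolates each $A_{r-\ell}(\cdot,\mu)$ and yields (I). Condition (I) in turn bounds $\|L(\mu)\|$; a normalization argument then shows $\|B(\mu)\|$ is bounded near $\mu_0$ (dividing $T(\mu_k)y_k$ by the blowing-up boundary norm, using finite-dimensionality of $\mathbb{C}^{rm}$ to extract a convergent subsequence of the normalized data, and contradicting $(\ast\ast)$). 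With this bound, $B(\mu)y_0-B(\mu_0)y_0=B(\mu)(y_0-y(\mu))\to0$ delivers (II).

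The main obstacle is precisely the asymmetry between the two parts: the boundary operator converges only strongly, so $\|T(\mu)-T(\mu_0)\|$ need not tend to zero and a direct Neumann-series perturbation is unavailable. Overcoming this rests on combining two structural facts with care --- that the boundary perturbation is finite-rank (reducing invertibility to an $rm\times rm$ determinant) and that the differential perturbation converges in operator norm (so that the vectors $v_j(\mu_k)$ and $R(\mu_k)^{-1}(f,c)$ against which the weak-$\ast$-null functionals are evaluated converge in norm, forcing the cross terms to vanish). In the necessity direction the analogous difficulty is the absence of any a priori bound on $\|L(\mu)\|$ and $\|B(\mu)\|$; the division by an invertible solution matrix and the finite-dimensional normalization/compactness argument are what remove it.
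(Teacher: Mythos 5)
Your argument is correct; its necessity half essentially coincides with the paper's proof, while the sufficiency half takes a genuinely different route. For necessity, both you and the paper use the same two devices: feeding the problem right-hand sides that do not depend on $\mu$, so that $(\ast)$ and $(\ast\ast)$ force a block of solutions with everywhere-invertible generalized Wronskian to converge, and then inverting that matrix to isolate the coefficients (the paper passes through the first-order companion system and recovers $K=-X'X^{-1}$, whereas you recover the row $(A_0,\dots,A_{r-1})$ directly --- a cosmetic difference); then the normalization--compactness contradiction in $\mathbb{C}^{rm}$ gives $\|B(\mu)\|=O(1)$, after which (II) follows from the identity $B(\mu)y-B(\mu_0)y=B(\mu)\bigl(y-y(\cdot,\mu)\bigr)$, exactly as in the paper's Step 3. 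For sufficiency, the paper is constructive: it imports continuous dependence for the Cauchy problem (via Theorem \ref{th1} and \cite[Theorem~1]{AtlasiukMikhailets20192}), builds the fundamental matrices $Y_k(\cdot,\mu)$, shows the characteristic matrix $[B(\mu)Y(\cdot,\mu)]$ converges to a nondegenerate limit, and reads $(\ast)$ and $(\ast\ast)$ off the explicit representation $v=\sum_k Y_k q_k$. You instead argue abstractly on $T(\mu)=(L(\mu),B(\mu))$: the differential perturbation is norm-small by the Banach-algebra estimate and is absorbed by a Neumann series, while the boundary perturbation, though only strongly null, has rank at most $rm$, so invertibility reduces to an $rm\times rm$ matrix whose entries $\beta_i(\mu)(v_j(\mu))$ vanish because uniformly bounded weak-$\ast$-null functionals are evaluated at norm-convergent vectors $v_j(\mu)=R(\mu)^{-1}(0,e_j)$. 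Your route is self-contained (it needs only the Fredholm index-zero fact, not the external Cauchy-problem limit theorem or the homeomorphism theorem) and makes transparent exactly why strong convergence of $B(\mu)$ suffices; the paper's route yields the explicit solution formula through the characteristic matrix, which it reuses elsewhere, e.g.\ in Theorem \ref{3.6.th-bound v}. Two details are worth making explicit in your write-up: the existence of the block $\mathcal{Y}_0$ with everywhere-invertible generalized Wronskian (take the polynomial columns $\frac{(t-a)^k}{k!}e_i$, $0\le k\le r-1$, $1\le i\le m$, whose Wronskian is block unitriangular), and, when $n=0$, that the final multiplication by the inverse Wronskian converges in $L_p$ because that inverse converges uniformly rather than in a Banach algebra norm.
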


\begin{corollary}
If Condition \textup{(0)} and Limit Conditions~\textup{(I)} and~\textup{(II)} are satisfied for all
$\mu \in \mathcal{M}$  and the right-hand sides $f$ and  $c$ are fixed,
then the solution to the boundary-value problem \eqref{bound_z1}, \eqref{bound_z2} exists and
is unique for every $\mu \in \mathcal{M}$ and  belongs to the space $C\big(\mathcal{M}; (W^{n+r}_p)^{m}\big)$.
\end{corollary}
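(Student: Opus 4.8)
The plan is to deduce the corollary directly from Theorem~\ref{nep v} together with the index-zero Fredholm property of the operators in the family~\eqref{(L,B)vp}, so that no genuinely new analysis is required. First I would settle existence and uniqueness for each individual parameter value. Fixing an arbitrary $\mu\in\mathcal{M}$ and taking it as the base point in Condition~(0), the hypothesis guarantees that the homogeneous problem at $\mu$ has only the trivial solution, i.e. $\ker\big(L(\mu),B(\mu)\big)=\{0\}$. Since every operator in the family~\eqref{(L,B)vp} is Fredholm with index zero, a trivial kernel forces a trivial cokernel, and (Fredholm operators having closed range) the operator is then a bijection of $(W^{n+r}_p)^m$ onto $(W^{n}_p)^m\times\mathbb{C}^{rm}$. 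Consequently, for the fixed right-hand sides $f$ and $c$ there is a unique solution $y(\cdot,\mu)$, and this holds for every $\mu\in\mathcal{M}$, so the map $\mu\mapsto y(\cdot,\mu)$ is well defined on all of $\mathcal{M}$.

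Next I would verify continuity of this map at every point of $\mathcal{M}$. At an isolated point continuity is automatic, so only limit points require attention. Fix a limit point $\mu_0\in\mathcal{M}$. By hypothesis, Condition~(0) and Limit Conditions~(I) and~(II) hold at $\mu_0$, so Theorem~\ref{nep v} applies and the solution depends continuously on $\mu$ at $\mu_0$ in the sense of Definition~\ref{defin_vp}. Because the right-hand sides are fixed, the constant families trivially satisfy $f(\cdot;\mu)=f\to f$ in $(W^{n}_p)^m$ and $c(\mu)=c\to c$ in $\mathbb{C}^{rm}$ as $\mu\to\mu_0$; hence the premise of condition~$(\ast\ast)$ is met, and its conclusion yields $y(\cdot,\mu)\to y(\cdot,\mu_0)$ in $(W^{n+r}_p)^m$ as $\mu\to\mu_0$. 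This is precisely continuity of $\mu\mapsto y(\cdot,\mu)$ at $\mu_0$. Since $\mu_0$ was an arbitrary limit point, the map lies in $C\big(\mathcal{M};(W^{n+r}_p)^m\big)$.

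Honestly, there is no hard analytic obstacle here: the solvability is handled by Condition~(0) plus the index-zero property, and the convergence is handled by applying Theorem~\ref{nep v} pointwise. The only points requiring care are bookkeeping in nature, namely translating the single-base-point continuity statement of the theorem into a global statement over all of $\mathcal{M}$, and observing that fixing $f$ and $c$ makes the premise of condition~$(\ast\ast)$ automatic. The mild subtlety I would flag is the treatment of isolated points of $\mathcal{M}$, which lie outside the scope of Definition~\ref{defin_vp} but are harmless since continuity at an isolated point holds trivially.
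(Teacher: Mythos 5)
Your proposal is correct and follows exactly the intended derivation: the paper states this corollary without proof as an immediate consequence of Theorem~\ref{nep v} together with the index-zero Fredholm property of the operators \eqref{(L,B)vp}, which is precisely the combination you use (Condition (0) plus index zero gives invertibility at every $\mu$, and condition $(\ast\ast)$ with constant right-hand sides gives continuity at every limit point). Your remark about isolated points is a sensible piece of bookkeeping that the paper leaves implicit.
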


It is worth noting that using an arbitrary metric space $\mathcal{M}$ in Theorem \ref{nep v} yields
a unified approach to both continuous and discrete parameters.

In the case of $r=1$, $\mathcal{M} = [0, \varepsilon_0]$, $\varepsilon_0>0$, $\mu_0=0$, Theorem \ref{nep v} was proved in  \cite[Theorem 1]{AtlasiukMikhailets20192} and in the case of $r=1$, $\mathcal{M} = I \subset \mathbb{R}$, where $I$ is an interval on $\mathbb{R}$, in  \cite[Theorem 1]{AtlasiukMikhailets2025}.

To prove Theorem \ref{nep v}, we will need the homeomorphism theorem, which is proved in article \cite[Theorem 3]{AtlasiukMikhailets20191}. For the convenience of readers, we will give the formulation of this result for our case.

Let us introduce a metric space of non-degenerate matrix-valued functions
$$ \mathcal{Y}_{p}^{n}:=\{Y(\cdot)\in (W_{p}^{n})^{m\times m}\colon \,Y(a)=I_{m},\quad
  \det Y(t)\neq 0, \quad t\in [a, b]\}$$
\noindent with the metric
 $$d_{n, p}(Y,Z):=\|Y(\cdot)-Z(\cdot)\|_{n,p}.$$

\begin{theorem}\label{th1}
A nonlinear mapping
\begin{equation*}\label{rr6}
A(\cdot)\mapsto Y(\cdot),
\end{equation*}
which associates to each matrix-valued function $A(\cdot) \in (W_p^{n})^{m\times m}$ a unique solution $Y(\cdot)$ of the Cauchy matrix problem
\begin{equation*}\label{r3}
Y'(t)+A(t)Y(t)=0,\quad t\in (a,b), \quad Y(a)=I_{m},
\end{equation*}
is a homeomorphism of the Banach space $(W_{p}^{n})^{m\times m}$ onto the metric space $\mathcal{Y}_{p}^{n}$.
\end{theorem}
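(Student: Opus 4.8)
The plan is to verify the four ingredients of a homeomorphism---well-definedness into $\mathcal{Y}_p^n$, injectivity, surjectivity, and continuity in both directions---for the nonlinear map $\Phi\colon A\mapsto Y$ together with its inverse, which I expect to be given explicitly by $Y\mapsto -Y'Y^{-1}$. First I would fix $A\in(W_p^n)^{m\times m}$; since the interval is finite, $A\in (L_1)^{m\times m}$, so the Carath\'eodory existence--uniqueness theorem produces a unique absolutely continuous $Y$ solving $Y'=-AY$, $Y(a)=I_m$. The Jacobi--Liouville identity $\det Y(t)=\exp\bigl(-\int_a^t\operatorname{tr}A(s)\,ds\bigr)$ shows $\det Y(t)\neq 0$ for every $t\in[a,b]$. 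I would then bootstrap the regularity: for $n\geq 1$ the embedding $W_p^n\hookrightarrow C^{n-1}$ and the Banach-algebra property of $W_p^n$ let me differentiate $Y'=-AY$ repeatedly, obtaining the Leibniz expression $Y^{(k)}=-\sum_{j=0}^{k-1}\binom{k-1}{j}A^{(j)}Y^{(k-1-j)}$ and, reading it for $k=n+1$, the inclusion $Y^{(n+1)}\in L_p$. Hence $Y$ has the required Sobolev smoothness and lies in $\mathcal{Y}_p^n$.

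Bijectivity I would settle through the explicit inverse. Injectivity is immediate: if $A_1$ and $A_2$ both yield $Y$, then $(A_1-A_2)Y\equiv 0$, and since $Y(t)$ is invertible at each point this forces $A_1=A_2$. For surjectivity, given $Y\in\mathcal{Y}_p^n$ I would put $A:=-Y'Y^{-1}$; the cofactor representation of $Y^{-1}$, the boundedness of $\det Y$ away from zero on the compact interval, and the stability of $W_p^n$ under multiplication and under division by a nonvanishing element show that $Y^{-1}$, and hence $A$, lies in the appropriate Sobolev class, while by construction $\Phi(A)=Y$. This identifies $\Phi^{-1}$ with the map $Y\mapsto -Y'Y^{-1}$.

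Continuity of $\Phi$ I would derive from the integral form $Y(t)=I_m-\int_a^t A(s)Y(s)\,ds$. Subtracting the equations for two coefficients $A,\widetilde A$ and their solutions $Y,\widetilde Y$ gives $Y-\widetilde Y=-\int_a^t\bigl(A(Y-\widetilde Y)+(A-\widetilde A)\widetilde Y\bigr)\,ds$, whence Gronwall's inequality yields $\|Y-\widetilde Y\|_{C^0}\leq C\|A-\widetilde A\|_{n,p}$ uniformly on bounded sets of coefficients. To upgrade this to the full $W_p^n$-norm I would differentiate, express each $Y^{(k)}$ through $A,\dots,A^{(k-1)}$ and the lower derivatives of $Y$ as above, and estimate the differences term by term using the algebra inequality $\|uv\|_{n,p}\leq C\|u\|_{n,p}\|v\|_{n,p}$. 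Continuity of $\Phi^{-1}=Y\mapsto -Y'Y^{-1}$ then follows by composing the continuous operations of differentiation, of multiplication, and of inversion $Y\mapsto Y^{-1}$ on the region where $\det Y$ stays uniformly bounded away from zero.

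I expect the analytically delicate point to be the bicontinuity in the strong Sobolev topology, not the algebraic bijectivity. Because $\Phi$ is genuinely nonlinear, the estimates must be transported through all derivatives up to the top order, which is exactly where the multiplicative structure of $W_p^n$ is indispensable. For the inverse the crux is controlling $Y^{-1}$, and in particular $1/\det Y$, in the $W_p^n$-norm, which requires keeping $\det Y$ uniformly away from zero along convergent sequences. The borderline case $n=0$ needs separate care, since $L_p$ is not a multiplication algebra; there I would instead exploit the continuity and boundedness of the fundamental matrix $Y$ to make sense of the products and of the pointwise conditions defining $\mathcal{Y}_p^0$.
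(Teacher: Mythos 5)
The paper does not actually prove Theorem~\ref{th1}: it is quoted verbatim from \cite[Theorem~3]{AtlasiukMikhailets20191} and used as a black box, so there is no in-paper argument to compare yours against line by line. That said, your outline is essentially the standard (and, as far as the cited source goes, the intended) proof: Carath\'eodory existence--uniqueness plus the Jacobi--Liouville formula for well-definedness and nondegeneracy, bootstrapping through the Banach-algebra structure of $W_p^n$ (with the separate treatment of $n=0$ via $A\in L_p$, $Y\in C$), the explicit inverse $Y\mapsto -Y'Y^{-1}$ for bijectivity, and the integral equation plus Gronwall followed by term-by-term differentiation for bicontinuity. All of these steps are sound. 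One point worth making explicit, which your write-up handles correctly but the paper's transcription obscures: since $Y'=-AY$ with $A\in (W_p^n)^{m\times m}$ forces $Y\in (W_p^{n+1})^{m\times m}$, the image of the map consists of $W_p^{n+1}$-regular matrices, and surjectivity onto $\mathcal{Y}_p^n$ only makes sense if that space is understood (as in the cited source) to carry $W_p^{n+1}$ regularity and the corresponding norm --- otherwise $A=-Y'Y^{-1}$ would land only in $W_p^{n-1}$. Your surjectivity and inverse-continuity arguments implicitly assume exactly this reading, which is the correct one; you might simply state it as a hypothesis rather than leaving it to the phrase ``the appropriate Sobolev class.'' With that clarification, your proposal is a complete and correct proof strategy.
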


\begin{proof}[Proof of Theorem \ref{nep v}]

Let us prove \emph{the necessity} in Theorem \ref{nep v}. So, we assume that  the solution of the
boundary-value problem \eqref{bound_z1}, \eqref{bound_z2} depends continuously on $\mu$
in the sense of  Definition \ref{defin_vp}.
Then, Condition~(0) obviously holds true, and there remains to prove that the problem also satisfies Limit
Conditions (I) and~(II). We divide the  proof into three steps.

\emph{Step 1.} We consider  Condition \textup{(I)}. To show that it holds, we use the canonical reduction of
the  system to a  system of first order differential equations (see, e.g., \cite{Cartan1971}). Denoting
by $\top$ the matrix transposition, we put
\begin{gather}\label{yx}
x(\cdot,\mu):= \bigl(y(\cdot,\mu),y'(\cdot, \mu),\ldots,y^{(r-1)}(\cdot, \mu)\bigr)^\top\in(W^{n+r}_p)^{rm},\\
g(\cdot,\mu):= \bigl(\underbrace{0, \dots, 0}_{(r-1)m},f(\cdot, \mu)\bigr)^\top
\in(W^{n}_p)^{rm},\notag\\
c(\mu):= \bigl(c_{1}(\mu),\ldots,c_{r}(\mu)\bigr)^\top
\in\mathbb{C}^{rm}\notag,
\end{gather}
and define the block matrix-valued function $K(\cdot,\mu) \in (W^{n}_p)^{rm\times rm}$ by
\begin{equation}\label{AAv-reduced}
K(\cdot,\mu):=\left(
\begin{array}{ccccc}
O_m & -I_m & O_m & \ldots & O_m \\
O_m & O_m & -I_m & \ldots & O_m \\
\vdots & \vdots & \vdots & \ddots & \vdots \\
O_m & O_m & O_m & \ldots & -I_m \\
A_0(\cdot,\mu) & A_1(\cdot,\mu) & A_2(\cdot,\mu) & \ldots & A_{r-1}(\cdot,\mu)\\
\end{array}\right),
\end{equation}
where $O_m$ and $I_m$ denote the null and identity matrices of dimension $m \times m$, respectively.
A vector-valued function $y(\cdot,\mu)\in(W^{n+r}_p)^{m}$ is a solution to the system \eqref{bound_z1} if and only if the vector-valued function \eqref{yx} is a solution to the system
\begin{gather*}\label{Cauchi-reduced-DE}
x'(t,\mu)+K(t,\mu)x(t,\mu)=g(t,\mu),
\quad t\in(a,b).
\end{gather*}

We denote by
\begin{equation*}\label{matrix_BYe}
\left[B(\mu)Y(\cdot,\mu)\right]:=\left(\left[B(\mu)Y_0(\cdot,\mu)\right],\dots,\left[B(\mu)
Y_{r-1}(\cdot,\mu)\right]\right) \in \mathbb{C}^{rm\times rm}
\end{equation*}
the numerical block  matrix of dimension $rm\times m$. It consists of $r$ square block columns $\left[B(\mu)Y_l(\cdot,\mu)\right]\in \mathbb{C}^{m\times m}$ in which $j$th column of the matrix $\left[B(\mu)Y_l(\cdot,\mu)\right]$ is the result of the action of the operator $B(\mu)$ on the
$j$-th column of the matrix-valued function $Y_l(\cdot,\mu)$.
Consider the following matrix boundary-value problem,
\begin{gather}
Y_\ell^{(r)}(t,\mu)+\sum\limits_{\ell=1}^{r}{A_{r-j}(t,\mu)Y_\ell^{(r-\ell)}(t,\mu)}=O_{m\times rm},
\quad t\in (a,b),\nonumber \\
\left[B(\mu)Y_\ell(\cdot,\mu)\right]=I_{rm}\label{eq2vk}.
\end{gather}
Here,
$$
Y_\ell(\cdot,\mu):=\left(y_\ell^{j,k}(\cdot,\mu)\right)_{\substack{j=1,\ldots,m\\ k=1,\ldots,rm}}
$$
is an unknown matrix-valued function of dimension $m \times rm$ and with entries in $W^{n+r}_p$, and
$O_{m \times rm}$ and   $I_{rm}$ are the zero and identity matrices of dimensions
$m \times rm$ and $m\times m$, respectively. Problem \eqref{eq2vk} is a collection of
$rm$ boundary-value problems \eqref{bound_z1}, \eqref{bound_z2}, the right-hand sides of which do not depend
on $\mu$.
Therefore, the problem has a unique solution $Y(\cdot;\mu)$  for every
$\mu\in \mathcal{B}(\mu_0, \varepsilon)$, due to condition $(\ast)$ of Definition \ref{defin_vp}. Moreover, by
$(\ast\ast)$, there holds the   convergence
\begin{equation}\label{zb ym}
y_{j,k}(\cdot,\mu)\rightarrow y_{j,k}(\cdot,\mu_0) \quad\mbox{in} \quad W^{n+r}_p \quad\mbox{as}\quad \mu\to\mu_0.
\end{equation}

For every $k\in\{1, \ldots, rm\}$ and $\mu\in \mathcal{B}(\mu_0, \varepsilon)$, we define the
vector-valued function $x_k(\cdot,\mu)\in(W^{n+r}_p)^{rm}$ by formula \eqref{yx}, where
take
$$
y(\cdot;\mu):=\big(y_{1,k}(\cdot;\mu),\ldots, y_{m,k}(\cdot;\mu)\big)^\top.
$$
Let $X(\cdot;\mu) \in (W^{n+r}_p)^{rm\times rm}$  denote the matrix-valued function such that its
$k$th column is $x_k(\cdot,\mu)$ for each $k\in\{1, \ldots, rm\}$. This function satisfies the matrix differential equation
\begin{equation}\label{X ym}
X'(t,\mu)+K(t,\mu)X(t,\mu)=O_{rm}, \quad t\in (a,b),
\end{equation}
where $O_{rm}$ is the null matrix of dimension $rm \times rm$.
We obtain $\det X(t;\mu)\neq0$ for all $t\in[a,b]$, since otherwise the columns of
$X(\cdot;\mu)$ and, hence, of $Y(\cdot;\mu)$ would be linearly dependent on $[a,b]$, contrary to~\eqref{eq2vk}.
Due to \eqref{zb ym}, we have the convergence $X(\cdot;\mu)\rightarrow X(\cdot;\mu_0)$ in the Banach algebra $(W^{n+r}_p)^{rm\times rm}$ as $\mu\to\mu_0$. Hence, $$\big(X(\cdot;\mu)\big)^{-1}\rightarrow \big(X(\cdot;\mu_0)\big)^{-1}$$ in the algebra. In view of~\eqref{X ym}, we thus conclude that
  $$
   K(\cdot;\mu)=-X'(\cdot;\mu)\big(X(\cdot;\mu)\big)^{-1} \rightarrow -X'(\cdot;\mu_0)\big(X(\cdot;\mu_0)\big)^{-1}=K(\cdot;\mu_0)
  $$
in $(W^{n}_p)^{rm\times rm}$ as $\mu\to\mu_0$. Thus, by the definition of $K$ in \eqref{AAv-reduced},
the problem \eqref{bound_z1}, \eqref{bound_z2} satisfies Limit Condition~(I). In particular,
\begin{equation}\label{ob Av}
\big\|A_{r-\ell}(\cdot,\mu)\big\|_{n,p}=O(1) \quad \mbox{as}\ \mu\to\mu_0,  \ \mbox{for all} \ \ell\in\{1, \ldots, r\}.
\end{equation}

\emph{Step 2.} 
We next prove that
\begin{equation}\label{ob Bv}
\|B(\mu)\|=O(1) \quad\mbox{as}\ \mu\to\mu_0.
\end{equation}

Suppose the contrary holds; then there exists a sequence $\left(\mu^{(k)}\right)_{k=1}^{\infty}\subset\mathcal{B}(\mu_0, \varepsilon)$ such that
\begin{equation}\label{ob Bev}
\mu^{(k)}\to\mu_0\quad\mbox{and}\quad 0<\left\|B\bigl(\mu^{(k)}\bigr)\right\|\to\infty \quad\mbox{as}
\ k\rightarrow \infty,
\end{equation}
where $\left\|B\bigl(\mu^{(k)}\bigr)\right\|\neq0$ for all $k\in \mathbb{N}$. For every
$k\in \mathbb{N}$, we choose a vector-valued function
$\omega_{k}\in(W^{n+r}_p)^{m}$ that satisfies the conditions
\begin{equation}\label{ob wv}
\|\omega_{k}\|_{n+r,p}=1 \quad \mbox{and} \quad \bigl\|B\bigl(\mu^{(k)}\bigr)\omega_{k}\bigr\|_{\mathbb{C}^{rm}}\geq \frac{1}{2}
\bigl\|B\bigl(\mu^{(k)}\bigr)\bigr\|.
\end{equation}
In addition, we put
\begin{gather*}
y\bigl(\cdot;\mu^{(k)}\bigr):=
\bigl\|B\bigl(\mu^{(k)}\bigr)\bigr\|^{-1}\omega_{k} \in (W^{n+r}_p)^{m},\\
f\bigl(\cdot;\mu^{(k)}\bigr):=
L\bigl(\mu^{(k)}\bigr)\,y\bigl(\cdot;\mu^{(k)}\bigr) \in (W^{n}_p)^{m},\\
c\bigl(\mu^{(k)}\bigr):=B\bigl(\mu^{(k)}\bigr)\,y\bigl(\cdot;\mu^{(k)}\bigr) \in \mathbb{C}^{rm}. \end{gather*}
Due to \eqref{ob Bev} and \eqref{ob wv}, there holds
\begin{equation}\label{zb ymal}
y\left(\cdot;\mu^{(k)}\right)\to0 \quad \mbox{in} \ (W^{n+r}_p)^{m}  \  \mbox{as} \ k\rightarrow \infty.
\end{equation}
Hence,
\begin{equation}\label{zb fmal}
f\left(\cdot;\mu^{(k)}\right)\to0 \quad \mbox{in}\ (W^{n}_p)^{m}  \  \mbox{as} \ k\rightarrow \infty
\end{equation}
because the problem \eqref{bound_z1}, \eqref{bound_z2} satisfies Limit Condition~(I), by Step 1.

Since the finite-dimensional space $\mathbb{C}^{rm}$ is locally compact, the bounds~\eqref{ob wv} yield
$$
1/2\leq\left\|c\left(\mu^{(k)}\right)\right\|_{\mathbb{C}^{rm}}\leq1.
$$
Indeed,
\begin{gather*}
\left\|c\left(\mu^{(k)}\right)\right\|_{\mathbb{C}^{rm}}\leq \left\|B\left(\mu^{(k)}\right)\right\| \left\|y\left(\cdot,\mu^{(k)}\right)\right\|_{n+r,p}=\\
=\left\|B\left(\mu^{(k)}\right)\right\| \left\|B\left(\mu^{(k)}\right)\right\|^{-1} \left\|\omega_k\right\|_{n+r,p}=1
\end{gather*}
and thus
\begin{gather*}
\left\|c\left(\mu^{(k)}\right)\right\|_{\mathbb{C}^{rm}}= \left\|B\left(\mu^{(k)}\right)\left(
\left\|B\left(\mu^{(k)}\right)\right\|^{-1}\omega_k\right)\right\| = \left\|B\left(\mu^{(k)}\right)\right\|^{-1} \left\|B\left(\mu^{(k)}\right)\omega_k\right\|\geq \frac{1}{2}.
\end{gather*}
Hence, there exists a subsequence $$\left(c\left(\mu^{(k_s)}\right)\right)^\infty_{s=1}\subset \left(c\left(\mu^{(k)}\right)\right)^\infty_{k=1}$$ and a nonzero vector $c(\mu_0)\in \mathbb{C}^{rm}$ such that
\begin{equation}\label{zb cmal}
c\left(\mu^{(k_s)}\right)\to c(\mu_0) \quad \mbox{in} \ \mathbb{C}^{rm}  \  \mbox{as} \ s\rightarrow \infty.
\end{equation}
For every integer $s\in \mathbb{N}$, the vector-valued function $y\left(\cdot;\mu^{(k_s)}\right)\in\left(W^{n+r}_p\right)^{m}$ is a unique solution to the boundary-value problem
\begin{gather*}
L\left(\mu^{(k_s)}\right)y\left(t;\mu^{(k_s)}\right)=f\left(t;\mu^{(k_s)}\right),  \quad t\in (a,b),\\
B\left(\mu^{(k_s)}\right)y\left(\cdot;\mu^{(k_s)}\right)=c\left(\mu^{(k_s)}\right).
\end{gather*}
Due to \eqref{zb fmal} and \eqref{zb cmal} and the condition $(\ast\ast)$ of Definition \ref{defin_vp}, we conclude that the function $y\left(\cdot;\mu^{(k_s)}\right)$ converges to the unique solution $y(\cdot;\mu_0)$ of the boundary-value problem
\begin{gather}
L(\mu_0)y(t,\mu_0)=0,\quad t\in (a,b), \nonumber\\
B(\mu_0)y(\cdot;\mu_0)=c(\mu_0) \label{kr ym}
\end{gather}
in the space $(W^{n+r}_p)^{m}$ as $k\rightarrow \infty$. But $y(\cdot;\mu_0)\equiv0$ due to \eqref{zb ymal}.
We obtain a contradiction with the boundary condition~\eqref{kr ym}, in which $c(\mu_0)\neq0$.
This proves the claim  \eqref{ob Bv}.

\emph{Step 3.} Using the results of the previous steps, we now show that problem \eqref{bound_z1}, \eqref{bound_z2} satisfies Limit Condition (II). According to \eqref{ob Av} and \eqref{ob Bv}, there exist numbers $\gamma'>0$ and $\varepsilon'>0$ such that
\begin{equation}\label{ner 1}
\|(L(\mu),B(\mu))\|\leq\gamma' \quad \mbox{for every}  \  \mu\in\mathcal{B}(\mu_0, \varepsilon'),
\end{equation}
where $\|\cdot\|$ denotes the norm of the bounded operator \eqref{(L,B)vp}. We choose an arbitrary
vector-valued function $y\in(W^{n+r}_p)^{m}$ and set $f(\cdot;\mu):=L(\mu)y$ and $c(\mu):=B(\mu)y$. Hence,
\begin{equation}\label{riv 1}
y=(L(\mu),B(\mu))^{-1}(f(\cdot;\mu),c(\cdot;\mu)) \quad \mbox{for every}  \  \mu\in\mathcal{B}(\mu_0, \varepsilon').
\end{equation}
Here, $(L(\mu),B(\mu))^{-1}$ denotes the inverse of the operator \eqref{(L,B)vp}; the invertibility
follows from condition $(\ast)$ of Definition \ref{defin_vp}.
Using \eqref{ner 1} and \eqref{riv 1}, we obtain, 
for every $\mu\in\mathcal{B}(\mu_0, \varepsilon')$,
\begin{align*}
& \bigl\|B(\mu)y-B(\mu_0)y\bigr\|_{\mathbb{C}^{rm}}
\leq\bigl\|(f(\cdot;\mu),c(\mu))-
(f(\cdot;\mu_0),c(\mu_0))\bigr\|_{(W^{n}_p)^{m}\times\mathbb{C}^{rm}}
\\
=& \bigl\|(L(\mu),B(\mu))(L(\mu),B(\mu))^{-1}(f(\cdot;\mu),c(\mu))-
(f(\cdot;\mu_0),c(\mu_0))\bigr\|_{(W^{n}_p)^{m}\times\mathbb{C}^{rm}}
\\
\leq & \gamma'\,\bigl\|(L(\mu),B(\mu))^{-1}
\bigl((f(\cdot;\mu),c(\mu))-
(f(\cdot;\mu_0),c(\mu_0))\bigr)\bigr\|_{n+r,p}
\\
= &\gamma'\,\bigl\|(L(\mu_0),B(\mu_0))^{-1}(f(\cdot;\mu_0),c(\mu_0))
-(L(\mu),B(\mu))^{-1}(f(\cdot;\mu_0),c(\mu_0))\bigr\|_{n+r,p},
\end{align*}  
where we used \eqref{riv 1} for the last identity. Here, the right hand side tends to 0 as
as $\mu\to\mu_0$, by the condition $(\ast\ast)$ of Definition \ref{defin_vp}.
This and \eqref{ob Bv} show that  the boundary-value problem \eqref{bound_z1}, \eqref{bound_z2} satisfies Limit Condition~(II), which completes the proof of the necessity.

Let us prove \emph{the sufficiency} in Theorem \ref{nep v}. Suppose that the boundary-value problem \eqref{bound_z1}, \eqref{bound_z2} satisfies Condition (0) and Limit Conditions (I) and (II). We show that the solution of this problem depends  continuously in the space $(W^{n+r}_p)^{m}$ on the~parameter $\mu$
at $\mu_0$. We divide the proof into four steps.

\emph{Step 1.} For every $\mu\in \mathcal{M}$, we first consider the Cauchy problem
\begin{gather}\label{s1}
L(\mu)y(t,\mu)=f(t,\mu),\quad t\in(a,b),\\
\hat{y}^{(j-1)}(a,\mu)=c_{j}(\mu),\quad j\in\{1,\ldots,r\}. \label{ku1}
\end{gather}
Here, for every $\mu$, the vector-valued function $f(\cdot,\mu)\in(W^{n}_p)^m$ and the vectors $c_{j}(\mu)
\in\mathbb{C}^{rm}$ are arbitrary. The unique solution $\hat{y}(\cdot,\mu)$ of this problem belongs to the
space $(W^{n+r}_p)^{m}$.

We show that the convergence of the right-hand sides
\begin{equation}\label{r_s.1}
f(\cdot,\mu)\to f(\cdot,\mu_0)\quad\mbox{in} \ (W^{n}_p)^{m}  \ \mbox{as} \  \mu\to \mu_0,
\end{equation}
\begin{equation}\label{gsp}
\begin{gathered}
c_{j}(\mu)\rightarrow c_{j}(\mu_0)\quad \mbox{in}  \  \mathbb{C}^{rm} \ \mbox{as} \  \mu\to \mu_0
 \ \mbox{for every} \  j\in\{1,\ldots,r\}
\end{gathered}
\end{equation}
implies the convergence of the solutions
\begin{equation}\label{gsx}
\hat{y}(\cdot,\mu)\to \hat{y}(\cdot,\mu_0)\quad\mbox{in} \ (W^{n+r}_p)^{m}  \ \mbox{as} \  \mu\rightarrow\mu_0.
\end{equation}
The Cauchy problem \eqref{s1}, \eqref{ku1} can be reduced to the following Cauchy problem for the
first order  system,
\begin{gather}\label{Cauchi-reduced-DE}
x'(t,\mu)+K(t,\mu)x(t,\mu)=g(t,\mu),
\quad t\in(a,b),\\
x(a,\mu)=c(\mu).\label{Cauchi-reduced-cond}
\end{gather}
Here, the matrix-valued function $K(\cdot,\mu)$ and the vector-valued function $g(\cdot,\mu)$ are the same as in Step 1 of the necessity proof. Moreover,
\begin{gather*}\label{ghggh}
x(\cdot,\mu):=\left(\hat{y}(\cdot,\mu),\hat{y}'(\cdot,\mu),\ldots,\hat{y}^{(r-1)}(\cdot,\mu)\right)^\top\in (W^{n+r}_p)^{rm}, \\
c(\mu):= \left(c(\mu),\ldots,c_r(\mu)\right)^\top\in \mathbb{C}^{rm}.
\end{gather*}
By assumption, the boundary-value problem~\eqref{bound_z1}, \eqref{bound_z2} satisfies Limit Condition
(I), hence,
\begin{equation*}\label{Kgsx}
K(\cdot,\mu)\to K(\cdot,\mu_0)\quad\mbox{in} \ (W^{n}_p)^{rm\times rm}  \ \mbox{as} \  \mu\rightarrow\mu_0.
\end{equation*}
Now,  conditions \eqref{r_s.1} and \eqref{gsp} imply the convergence of the right-hand sides of the problem \eqref{Cauchi-reduced-DE}, \eqref{Cauchi-reduced-cond} \begin{equation*}\label{r_g.1}
g(\cdot,\mu)\to g(\cdot,\mu_0)\quad\mbox{in} \ (W^{n}_p)^{rm}  \ \mbox{as} \  \mu\to\mu_0,
\end{equation*}
\begin{equation*}\label{ggp}
\begin{gathered}
c(\mu)\rightarrow c(\mu_0)\quad\mbox{in} \ \mathbb{C}^{rm} \ \mbox{as} \  \mu\to \mu_0.
\end{gathered}
\end{equation*}
Therefore,  \cite[Theorem~1]{AtlasiukMikhailets20192}, \eqref{r_s.1} and \eqref{gsp} imply
the convergence \eqref{gsx}.

\emph{Step 2.} We prove that the boundary-value problem~\eqref{bound_z1}, \eqref{bound_z2} satisfies the condition $(\ast)$ of  Definition \ref{defin_vp}, that is, for sufficiently small $d(\mu,\mu_0)$, the operator $(L(\mu),B(\mu))$ is invertible.

For every $\mu\in \mathcal{M}$ and $k\in\{0,\ldots,r-1\}$, we consider matrix Cauchy problem
\begin{gather}\label{zad kosh1v}
Y_k^{(r)}(t,\mu)+\sum\limits_{j=1}^rA_{r-j}(t,\mu)Y_k^{(r-j)}(t,\mu)=O_{m},\quad t\in (a,b),
\end{gather}
with initial conditions
\begin{gather*}\label{zad kosh2v}
Y_k^{(j)}(t_0,\mu) = \delta_{k,j}I_m,\quad j \in \{1,\dots, r-1\}.
\end{gather*}
Here,
$$
Y_k(t,\mu)=\left(y_k^{\alpha,\beta}(t,\mu)\right)_{\alpha,\beta=1}^m
$$
is an unknown $m\times m$-matrix-valued function, the point $t_0 \in [a,b]$ is fixed, $\delta_{k,j}$ is
the Kronecker  symbol, $O_{m}$ and $I_m$ are as in \eqref{AAv-reduced}. Problem \eqref{zad kosh1v} consists of $m$ Cauchy problems of the form \eqref{s1},
\eqref{ku1} with $f=0$ for the unknown vector functions $\hat{y}(\cdot,\mu)$, which are the
columns of the matrix $Y_k(\cdot,\mu)$.

We write the solution of the homogeneous differential equation \eqref{bound_z1} in the form
\begin{equation}\label{solution_by_Yv}
y(\cdot,\mu)=\sum\limits_{k=0}^{r-1}Y_k(\cdot,\mu)q_k(\mu),
\end{equation}
where $q_k(\mu)\in\mathbb{C}^{rm}$ are arbitrary column vectors, cf. \cite{Cartan1971}.
The right-hand sides of this problem do not depend on $\mu$, hence,
\begin{equation}\label{Zlimitv}
Y_k(\cdot,\mu)\to Y_k(\cdot,\mu_0) \quad\mbox{in} \ (W^{n+r}_p)^{m\times m} \ \mbox{as} \  \mu\to\mu_0,
\end{equation}
according to Step 1 and Theorem \ref{th1}. In view of Limit Condition~(II), this yields the convergence
\begin{equation}\label{BZlimit}
\begin{gathered}
\bigl([B(\mu)Y_0(\cdot,\mu)],\ldots,
[B(\mu)Y_{r-1}(\cdot,\mu)]\bigr)\to\\
\to\bigl([B(\mu_0)Y_0(\cdot,\mu)],\ldots,
[B(\mu_0)Y_{r-1}(\cdot,\mu)]\bigr)\quad \mbox{in}  \  \mathbb{C}^{rm\times rm} \
\mbox{as} \ \mu\to\mu_0.
\end{gathered}
\end{equation}
However, the limit matrix is nondegenerate, due to Limit
Condition (0) and \cite[Lemma 1]{AtlasiukMikhailets20192}. Therefore, one can find
$\varepsilon_1  >0 $ such that, for every  $\mu\in\mathcal{B}(\mu_0, \varepsilon)$,
\begin{equation}\label{BZdet}
\begin{gathered}
\det\bigl(M(L(\mu),B(\mu))\bigr)\neq0.
\end{gathered}
\end{equation}
Hence, by \cite[Lemma 1]{AtlasiukMikhailets20192}, the operator \eqref{(L,B)vp} is invertible.

\emph{Step 3.} We show that the boundary-value problem~\eqref{bound_z1}, \eqref{bound_z2} satisfies the condition $(\ast\ast)$ of Definition \ref{defin_vp}. Let us first analyze the case $f(\cdot,\mu)\equiv0$:
we consider a $\mu$-dependent semihomogeneous boundary-value problem
 \begin{equation}\label{vv}
    L(\mu)v(\cdot;\mu)\equiv 0,
\end{equation}
 \begin{equation}\label{vvk}
 B(\mu)v(\cdot;\mu) =c(\mu).
\end{equation}
According to Step 2, this problem has a unique solution  $v(\cdot;\mu)\in(W^{n+r}_p)^{m}$ for every $\mu\in \mathcal{B}(\mu_0, \varepsilon')$, where $\varepsilon' > 0$ is
sufficiently small. Assuming
 \begin{equation}\label{zb c2v}
  c(\mu)\rightarrow c(\mu_0) \quad \mbox{in}  \  \mathbb{C}^{rm}  \  \mbox{as}  \ \mu\rightarrow \mu_0,
 \end{equation}
we next show that
 \begin{equation}\label{zb v2v}
  v(\cdot,\mu)\rightarrow v(\cdot,\mu_0) \quad \mbox{in}  \  (W^{n+r}_p)^{m}  \  \mbox{as}  \ \mu\rightarrow \mu_0.
 \end{equation}

For every $\mu\in\mathcal{B}(\mu_0, \varepsilon')$, we write the general solution of
 \eqref{vv} in the form \eqref{solution_by_Yv}, that is
\begin{equation}\label{solution_by_vv}
v(\cdot,\mu)=\sum\limits_{k=0}^{r-1}Y_k(\cdot,\mu)q_k(\mu),
\end{equation}
for some  $q_k(\mu)\in\mathbb{C}^{rm}$ and  matrix-valued functions $Y_k(\cdot,\mu)
\in(W^{n+r}_p)^{m\times m}$ as in Step 2. By virtue of \cite[Lemma 6]{AtlasiukMikhailets20191},
there holds
\begin{equation*}\label{rivn_matruz vv}
B(\mu)v(\cdot,\mu)=\sum _{k=0}^{r-1}B(\mu)(Y_k(\cdot,\mu)q_k(\mu)) = \sum _{k=0}^{r-1}\left[B(\mu)Y_k(\cdot,\mu)\right]q_k(\mu),
\end{equation*}
and we thus see that the boundary problem \eqref{vvk} is equivalent with
\begin{equation*}\label{rivn_matruz vv}
\sum _{k=0}^{r-1}\left[B(\mu)Y_k(\cdot,\mu)\right]q_k(\mu)=c(\mu).
\end{equation*}
This equation can be rewritten in the form of a linear algebraic system 
\begin{equation*}\label{rivn_matruz vv}
\big(\left[B(\mu)Y_0(\cdot,\mu)\right], \ldots, \left[B(\mu)Y_{r-1}(\cdot,\mu)\right]\big)q(\mu)=c(\mu),
 \ \mbox{or},  \  
\left[B(\mu)Y(\cdot,\mu)\right]q(\mu)=c(\mu) ,
\end{equation*}
where $q(\mu):=(q_0(\mu),\ldots, q_{r-1}(\mu))^\top.$
Now, 
\eqref{BZlimit}, \eqref{BZdet} and assumption \eqref{zb c2v} yield 
\begin{equation*}\label{rivn_matruz vv}
q(\mu)\left[B(\mu)Y(\cdot,\mu)\right]^{-1}c(\mu)\rightarrow\left[B(\mu_0)Y(\cdot,\mu_0)\right]^{-1}c(\mu_0)=q(\mu_0) \quad \mbox{as}  \ \mu\to \mu_0.
\end{equation*}
Therefore, \eqref{zb v2v} follows from  formulas \eqref{Zlimitv}, \eqref{solution_by_vv}, since
\begin{equation*}\label{rivn_matruz vv}
v(\cdot,\mu)=\sum _{k=0}^{r-1}Y_k(\cdot,\mu)q_k(\mu) \rightarrow \sum _{k=0}^{r-1}Y_k(\cdot,\mu_0)q_k(\mu_0)=v(\cdot,\mu_0)
\end{equation*}
in $(W^{n+r}_p)^{m}$ as $\mu\to \mu_0.$

\emph{Step 4.} We finally consider the general case of an inhomogeneous differential equation \eqref{bound_z1}. Suppose that conditions \eqref{zb c2v} and the condition
\begin{equation}\label{zb fvvy}
f(\cdot,\mu) \rightarrow f(\cdot,\mu_0) \quad \mbox{in}  \  (W^{n}_p)^{m}  \  \mbox{as}  \ \mu\to \mu_0
\end{equation}
are satisfied.
For every $\mu \in \mathcal{B}(\mu_0, \varepsilon_1)$, we set
$$z(\cdot;\mu)=y(\cdot;\mu)-\hat{y}(\cdot;\mu),$$
where the vector-valued function $y(\cdot;\mu)$ is a solution of the inhomogeneous boundary-value problem
\eqref{bound_z1}, \eqref{bound_z2} and $\hat{y}(\cdot;\mu)$ is a solution of the Cauchy problem \eqref{s1}, \eqref{ku1}, with $c_{j}(\mu)\equiv0$. Then, $z(\cdot;\mu)$ is a solution of the semihomogeneous boundary-value problem
\begin{gather*}\label{s2}
L(\mu)z(\cdot,\mu)\equiv0,\\
B(\mu)z(\cdot,\mu)=\tilde{c}(\mu),\\
\tilde{c}(\mu)=c(\mu)-B(\mu)\hat{y}(\cdot,\mu) \in \mathbb{C}^{rm}.
\end{gather*}
In Step 1 it was shown that $\hat{y}(\cdot,\mu)$ satisfies property \eqref{gsx} if condition
\eqref{zb fvvy} is fulfilled. This, \eqref{zb c2v} and the assumption that problem~\eqref{bound_z1},
\eqref{bound_z2} satisfies Limit Condition (I), imply
 \begin{equation*}\label{zb c2tv}
  \tilde{c}(\mu)\rightarrow \tilde{c}(\mu_0) \quad \mbox{in}  \  \mathbb{C}^{rm}  \  \mbox{as}  \ \mu\rightarrow \mu_0.
 \end{equation*}
Hence, applying Step 3, we conclude that
\begin{equation*}\label{zb fvy}
z(\cdot,\mu) \rightarrow z(\cdot,\mu_0) \quad \mbox{in}  \  (W^{n+r}_p)^{m}  \  \mbox{as}  \ \mu\to \mu_0 ,
\end{equation*}
and we obtain the desired convergence from  formula \eqref{gsx}:
\begin{gather*}\label{zb fvy}
y(\cdot,\mu)= \hat{y}(\cdot,\mu)+z(\cdot,\mu)\rightarrow \hat{y}(\cdot,\mu_0)+z(\cdot,\mu_0)=y(\cdot,\mu_0)\\ \quad \mbox{in}  \  (W^{n+r}_p)^{m}  \  \mbox{as}  \ \mu\to \mu_0.
\end{gather*}
This proves the sufficiency.
\end{proof}

We supplement the previous result with an error estimate  for   the solution $y(\cdot;\mu)$,
namely, considering $y(\cdot;\mu)$ as an approximate solution of the problem \eqref{bound_z1} for the
parameter value $\mu_0$, \eqref{bound_z2}, we show that
$
\bigl\|y(\cdot;\mu_0)-y(\cdot;\mu)\bigr\|_{n+r,p}
$
is proportional to the discrepancy
\begin{equation*}\label{nevyuzka v}
\widetilde{d}_{n,p}(\mu):=
\bigl\|L(\mu)y(\cdot;\mu_0)-f(\cdot;\mu)\bigr\|_{n,p}+
\bigl\|B(\mu)y(\cdot;\mu_0)-c(\mu)\bigr\|_{\mathbb{C}^{rm}}.
\end{equation*}

\begin{theorem}\label{3.6.th-bound v}
Let $\mu_0 \in \mathcal{M}$ and assume that the boundary-value problem \eqref{bound_z1},
\eqref{bound_z2} satisfies Condition \textup{(0)} and Limit Conditions \textup{(I)} and \textup{(II)}.
Then there exist positive numbers $\varepsilon$, $\gamma_{1}$, and $\gamma_{2}$ such that
\begin{equation}\label{3.6.bound}
\begin{aligned}
\gamma_{1}\,\widetilde{d}_{n,p}(\mu)
\leq\bigl\|y(\cdot;\mu_0)-y(\cdot;\mu)\bigr\|_{n+r,p}\leq
\gamma_{2}\,\widetilde{d}_{n,p}(\mu),
\end{aligned}
\end{equation}
for all $\mu\in\mathcal{B}(\mu_0, \varepsilon)$. Here, $\varepsilon$, $\gamma_{1}$, and $\gamma_{2}$ do not depend on $y(\cdot;\mu_0)$ or  $y(\cdot;\mu)$.
\end{theorem}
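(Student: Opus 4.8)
The plan is to read the error $y(\cdot;\mu_0)-y(\cdot;\mu)$ as the image, under the inverse of the operator \eqref{(L,B)vp}, of the residual pair whose norm is exactly $\widetilde d_{n,p}(\mu)$, and then to bound the operator and its inverse uniformly over a small ball. First I would fix $\varepsilon>0$ so small that, for every $\mu\in\mathcal B(\mu_0,\varepsilon)$, the operator $(L(\mu),B(\mu))$ in \eqref{(L,B)vp} is invertible; this is available from Step~2 of the sufficiency proof of Theorem~\ref{nep v} (the nondegeneracy \eqref{BZdet} together with \cite[Lemma 1]{AtlasiukMikhailets20192}), which only uses Condition~(0) and Limit Conditions~(I), (II). The central observation is the algebraic identity obtained by subtracting $(L(\mu),B(\mu))y(\cdot;\mu)=(f(\cdot;\mu),c(\mu))$ from the action of $(L(\mu),B(\mu))$ on $y(\cdot;\mu_0)$:
\begin{equation*}
(L(\mu),B(\mu))\bigl(y(\cdot;\mu_0)-y(\cdot;\mu)\bigr)
=\bigl(L(\mu)y(\cdot;\mu_0)-f(\cdot;\mu),\,B(\mu)y(\cdot;\mu_0)-c(\mu)\bigr).
\end{equation*}
Since the norm on the target space $(W^{n}_p)^m\times\mathbb C^{rm}$ is the sum of the component norms, the norm of the right-hand side equals precisely $\widetilde d_{n,p}(\mu)$.

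With this identity the lower bound is immediate: using the uniform operator bound $\|(L(\mu),B(\mu))\|\le\gamma'$ from \eqref{ner 1} (its two ingredients, $\|A_{r-\ell}(\cdot;\mu)\|_{n,p}=O(1)$ and $\|B(\mu)\|=O(1)$, follow respectively from Limit Condition~(I) and, via the uniform boundedness principle applied to the strong convergence in Limit Condition~(II), from \eqref{ob Bv}), I would estimate $\widetilde d_{n,p}(\mu)\le\gamma'\,\|y(\cdot;\mu_0)-y(\cdot;\mu)\|_{n+r,p}$, giving the left inequality with $\gamma_1=1/\gamma'$. For the upper bound I would apply $(L(\mu),B(\mu))^{-1}$ to the identity above, obtaining $\|y(\cdot;\mu_0)-y(\cdot;\mu)\|_{n+r,p}\le\|(L(\mu),B(\mu))^{-1}\|\,\widetilde d_{n,p}(\mu)$; it then remains to produce a constant $\gamma_2\ge\sup_{\mu\in\mathcal B(\mu_0,\varepsilon)}\|(L(\mu),B(\mu))^{-1}\|$ independent of $\mu$.

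The main obstacle is exactly this uniform boundedness of the inverses, for Limit Condition~(II) supplies only strong (pointwise) convergence of $B(\mu)$, so stability of the inverse is not automatic and must be extracted from the explicit solution formula. To handle it I would reuse the construction from the sufficiency proof: write $y(\cdot;\mu)=\hat y(\cdot;\mu)+v(\cdot;\mu)$, where $\hat y$ solves the Cauchy problem \eqref{s1}, \eqref{ku1} and $v(\cdot;\mu)=\sum_{k=0}^{r-1}Y_k(\cdot;\mu)q_k(\mu)$ with $q(\mu)=[B(\mu)Y(\cdot;\mu)]^{-1}\tilde c(\mu)$ and $\tilde c(\mu)=c(\mu)-B(\mu)\hat y(\cdot;\mu)$. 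By \eqref{Zlimitv} the matrices $Y_k(\cdot;\mu)$ converge in $(W^{n+r}_p)^{m\times m}$ and are therefore uniformly bounded; by \eqref{BZlimit} and the nondegeneracy of the limit matrix the inverses $[B(\mu)Y(\cdot;\mu)]^{-1}$ stay uniformly bounded after possibly shrinking $\varepsilon$; the Cauchy solution operator $f\mapsto\hat y$ is uniformly bounded by Theorem~\ref{th1} and the convergence of the associated fundamental matrices; and $\|B(\mu)\|\le\gamma'$ by \eqref{ob Bv}. Collecting these bounds in the representation $y(\cdot;\mu)=(L(\mu),B(\mu))^{-1}(f(\cdot;\mu),c(\mu))$ yields a finite $\gamma_2$ that does not depend on $\mu$, $y(\cdot;\mu_0)$, or $y(\cdot;\mu)$, which together with the lower bound completes the proof.
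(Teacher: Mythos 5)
Your proposal is correct and shares the paper's skeleton: the same algebraic identity $(L(\mu),B(\mu))\bigl(y(\cdot;\mu_0)-y(\cdot;\mu)\bigr)=\bigl(L(\mu)y(\cdot;\mu_0)-f(\cdot;\mu),\,B(\mu)y(\cdot;\mu_0)-c(\mu)\bigr)$, the same lower bound via the uniform operator bound \eqref{ner 1} with $\gamma_1=1/\gamma'$, and the same reduction of the upper bound to a uniform estimate $\sup_{\mu}\|(L(\mu),B(\mu))^{-1}\|\le\gamma_2$. Where you genuinely diverge is in how that uniform estimate is obtained. The paper takes a soft route: by Theorem~\ref{nep v} the hypotheses imply continuous dependence in the sense of Definition~\ref{defin_vp}, hence the inverses converge strongly, $(L(\mu),B(\mu))^{-1}\stackrel{s}{\longrightarrow}(L(\mu_0),B(\mu_0))^{-1}$, and a single application of the Banach--Steinhaus theorem to this strongly convergent family yields the uniform bound $\gamma_2$ — two lines, but non-quantitative. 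You instead rebuild the bound constructively from the solution formula $y=\hat y+v$, $v=\sum_k Y_k q_k$, $q=[B(\mu)Y(\cdot;\mu)]^{-1}\tilde c(\mu)$, using the uniform boundedness of the fundamental matrices and their inverses (from \eqref{Zlimitv} and Theorem~\ref{th1}), of $[B(\mu)Y(\cdot;\mu)]^{-1}$ (from \eqref{BZlimit} and \eqref{BZdet}), of the Cauchy solution operator, and of $\|B(\mu)\|$ via \eqref{ob Bv}. This costs more bookkeeping (in particular the uniform bound on $f\mapsto\hat y$ needs the Banach-algebra/variation-of-constants argument you only sketch), but it buys an explicit, in-principle computable $\gamma_2$ and sidesteps the mild subtlety of invoking the uniform boundedness principle for a family indexed by a metric-space parameter rather than a sequence. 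Both arguments are sound; it is worth noting that your strong-convergence-plus-uniform-boundedness observation for $\|B(\mu)\|=O(1)$ is exactly the paper's mechanism too, so you could have applied the identical Banach--Steinhaus trick one level up, to the inverses themselves, and recovered the paper's shorter proof.
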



\begin{proof}  
The  left-hand side of~\eqref{3.6.bound} follows from \eqref{ner 1}, once $\varepsilon_0$ is chosen to be
smaller than or equal to $\varepsilon'$:
\begin{gather*}
\widetilde{d}_{n,p}(\mu)=\bigl\|L(\mu)y(\cdot;\mu_0)-L(\mu)y(\cdot;\mu)\bigr\|_{n,p}+
\bigl\|B(\mu)y(\cdot;\mu_0)-B(\mu)y(\cdot;\mu)\bigr\|_{\mathbb{C}^{rm}}\leq
\\
\leq\bigl\|L(\mu)\bigr\|\bigl\|y(\cdot;\mu_0)-y(\cdot;\mu)\bigr\|_{n+r,p}+
\bigl\|B(\mu)\bigr\|\bigl\|y(\cdot;\mu_0)-y(\cdot;\mu)\bigr\|_{n+r,p}
\leq
\\ \leq \gamma'\bigl\|y(\cdot;\mu_0)-y(\cdot;\mu)\bigr\|_{n+r,p}.
\end{gather*}

To see the right-hand side, by Theorem~\ref{nep v}, the boundary-value problem \eqref{bound_z1}, \eqref{bound_z2} satisfies Definition~\ref{defin_vp} so that the operator \eqref{(L,B)vp} is invertible for every small $d(\mu,\mu_0)$. Moreover,
there holds the strong convergence
$$
(L(\mu),B(\mu))^{-1} \stackrel{s}{\longrightarrow} (L(\mu_0),B(\mu_0))^{-1}, \quad \mu\to\mu_0.
$$
Indeed, for arbitrary $f\in(W^{n}_p)^{m}$ and $c\in\mathbb{C}^{rm}$, the condition $(\ast\ast)$ of Definition~\ref{defin_vp}
yields
\begin{equation*}
(L(\mu),B(\mu))^{-1}(f,c)=:y(\cdot,\mu)\to
y(\cdot,\mu_0):=(L(\mu_0),B(\mu_0))^{-1}(f,c)
\end{equation*}
in the space $(W^{n+r}_p)^{m}$, as $\mu\to\mu_0$.
Hence, by the Banach -- Steinhaus theorem, the norms of these inverse operators are bounded, namely, there exist positive numbers $\varepsilon$ and $\gamma_{2}$ such that
$$
\left\|(L(\mu),B(\mu))^{-1}\right\|\leq\gamma_{2} \quad\mbox{for every} \  \gamma\in\mathcal{B}(\mu_0, \varepsilon).
$$
Thus, for every $\mu\in\mathcal{B}(\mu_0, \varepsilon)$, we conclude that
\begin{gather*}
\big\|y(\cdot,\mu_0)-y(\cdot,\mu)\big\|_{n+r,p}
=\|(L(\mu),B(\mu))^{-1}(L(\mu),B(\mu))
(y(\cdot,\mu_0)-y(\cdot,\mu))\|_{n+r,p}\leq\\
\leq\gamma_{2}\,\|(L(\mu),B(\mu))
(y(\cdot,\mu_0)-y(\cdot,\mu))\|_{(W^{n}_p)^{m}\times\mathbb{C}^{rm}}=\gamma_{2}\,\widetilde{d}_{n,p}(\mu).
\end{gather*}
This yields the right-hand side of  \eqref{3.6.bound}.
\end{proof}

\section{Limit theorems concerning the operators of the boundary-value problem}\label{Sec.3}

Our aim in this section is to present conditions  on the coefficients of differential expressions and
operators $B(\mu)$ under which $\big(L(\mu),B(\mu)\big)$ converges to the operator $\big(L(\mu_0),B(\mu_0)\big)$
in the strong and uniform operator topologies.
First, we formulate necessary and sufficient conditions for the strong and uniform convergence of the family
of operators $L(\mu)$ to the operator $L(\mu_0)$.

\begin{theorem}\label{eqival ym L}
Let $1\leq p\leq \infty$. The following convergence conditions are equivalent, when $\mu\to\mu_0$
in the metric space $\mathcal{M}$:
\begin{itemize}
\item [(I)] $A_{r-\ell}(\cdot,\mu) \rightarrow A_{r-\ell}(\cdot, \mu_0)$ in the Banach space
$(W^{n}_p)^{m\times m}$ for all $\ell\in\{1,\ldots, r\}$;
\item [(II)] $L(\mu) \rightarrow L(\mu_0)$ in the uniform operator topology;
\item [(III)] $L(\mu) \rightarrow L(\mu_0)$ in the strong operator topology.
\end{itemize}
\end{theorem}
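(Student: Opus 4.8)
The plan is to prove the cyclic chain of implications $(I)\Rightarrow(II)\Rightarrow(III)\Rightarrow(I)$, since the implication $(II)\Rightarrow(III)$ is automatic: uniform operator convergence always implies strong operator convergence. Thus the real content lies in establishing $(I)\Rightarrow(II)$ and closing the loop with $(III)\Rightarrow(I)$.

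For $(I)\Rightarrow(II)$, first I would observe that the difference operator $L(\mu)-L(\mu_0)$, acting on $y\in(W^{n+r}_p)^m$, produces the vector-valued function
\begin{equation*}
\bigl(L(\mu)-L(\mu_0)\bigr)y=\sum_{\ell=1}^{r}\bigl(A_{r-\ell}(\cdot,\mu)-A_{r-\ell}(\cdot,\mu_0)\bigr)\,y^{(r-\ell)},
\end{equation*}
since the top-order term $y^{(r)}$ cancels. The key step is then to estimate the $(W^{n}_p)^m$-norm of each summand by the product of the $(W^{n}_p)^{m\times m}$-norm of the coefficient difference and the $(W^{n+r}_p)^m$-norm of $y$. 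This rests on the fact that $W^{n}_p$ is a Banach algebra under pointwise multiplication (for the relevant ranges of $n$ and $p$, with the usual convention for the case $n=0$), together with the continuous embedding $y^{(r-\ell)}\in(W^{n}_p)^m$ whenever $y\in(W^{n+r}_p)^m$ and $\ell\geq 1$. Taking the supremum over $\|y\|_{n+r,p}\leq 1$ gives the operator-norm bound
\begin{equation*}
\|L(\mu)-L(\mu_0)\|\leq C\sum_{\ell=1}^{r}\bigl\|A_{r-\ell}(\cdot,\mu)-A_{r-\ell}(\cdot,\mu_0)\bigr\|_{n,p},
\end{equation*}
and $(I)$ forces the right-hand side to zero, yielding $(II)$.

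For $(III)\Rightarrow(I)$, the idea is to test the strong convergence against carefully chosen probe functions that isolate one coefficient at a time. Fixing $\ell\in\{1,\ldots,r\}$ and an index pair, I would apply the operators to a monomial-type vector function $y$ whose $(r-\ell)$-th derivative equals a prescribed constant vector while all other derivatives $y^{(r-j)}$ with $j\neq\ell$ vanish at a given point; more robustly, one can use a family of test functions concentrated so that only the $A_{r-\ell}$ term survives in the limit, recovering $A_{r-\ell}(\cdot,\mu)\to A_{r-\ell}(\cdot,\mu_0)$ from the convergence $L(\mu)y\to L(\mu_0)y$ in $(W^n_p)^m$. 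Since the differential orders $r-\ell$ are distinct for distinct $\ell$, this separation is genuinely possible.

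The main obstacle I anticipate is the separation argument in $(III)\Rightarrow(I)$: ensuring that testing against a finite, fixed collection of functions $y$ pins down \emph{all} the coefficient differences simultaneously in the full $(W^n_p)^{m\times m}$-norm, rather than merely at isolated points. The cleanest route is to exploit the triangular structure of the orders: choose probes so that the highest surviving derivative isolates $A_{r-1}$, recover its convergence first, then subtract its contribution and descend to $A_{r-2}$, and so on inductively down to $A_0$. Care is needed with the case $p=\infty$ and $n=0$, where the algebra property and density arguments are slightly more delicate, but the structural recursion over $\ell$ remains valid throughout.
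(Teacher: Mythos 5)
Your overall architecture coincides with the paper's: prove $(\mathrm{I})\Rightarrow(\mathrm{II})$ by an operator-norm estimate and $(\mathrm{III})\Rightarrow(\mathrm{I})$ by testing against polynomial probes, with $(\mathrm{II})\Rightarrow(\mathrm{III})$ free. Your $(\mathrm{I})\Rightarrow(\mathrm{II})$ step is essentially the paper's, though "the usual convention for the case $n=0$" glosses over the one real point there: $L_p$ is \emph{not} a Banach algebra, so for $n=0$ the estimate must instead pair $\|A_{r-\ell}(\cdot,\mu)-A_{r-\ell}(\cdot,\mu_0)\|_{0,p}$ with $\|y^{(r-\ell)}\|_\infty$ via the embedding $W^r_p\hookrightarrow C^{(r-1)}$; this should be said explicitly rather than absorbed into a convention.

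The genuine gap is the direction of your recursion in $(\mathrm{III})\Rightarrow(\mathrm{I})$. You propose to isolate $A_{r-1}$ first and descend to $A_0$. But $A_{r-1}$ multiplies $y^{(r-1)}$, and no admissible probe isolates it: if $y^{(r-j)}\equiv 0$ on the interval for all $j\geq 2$ (in particular $y\equiv 0$), then $y^{(r-1)}\equiv 0$ as well, so the descent cannot begin. Making the lower derivatives vanish only "at a given point," as you also suggest, yields at best pointwise information and says nothing about convergence in the $(W^n_p)^{m\times m}$-norm --- an obstacle you yourself flag but do not resolve. The triangular structure runs the other way: the only coefficient a single probe isolates is $A_0$ (the coefficient of $y^{(0)}$), via the constant $Y=I_m$, since all higher derivatives of a constant vanish identically. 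One then takes $Y=tI_m$, which produces $A_1+A_0t$, subtracts the already-controlled $A_0t$ (multiplication by $t$ being continuous on $(W^n_p)^{m\times m}$) to recover $A_1$, and continues with $Y=t^2I_m$, etc., ascending to $A_{r-1}$. This is exactly the paper's argument; your strategy is correct once the recursion is reversed, but as written its first step fails.
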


Note that in the case where the metric parameter $\mu$ is a natural number, Theorem \ref{eqival ym L} was
proved in the paper \cite[Lemma 6.1]{NovAM2023}. The proof of the general case is similar, but we
present the details 
for the convenience of the reader.

\begin{proof}[Proof of Theorem \ref{eqival ym L}]
It suffices to show that $(\mathrm{I})\Rightarrow (\mathrm{II})$ and $(\mathrm{III}) \Rightarrow (\mathrm{I}).$
If $(\mathrm{I})$ holds, we get for all $n > 0 $ and $y\in (W^{n+r}_p)^{m}$,
\begin{align*}
\bigl\|(L(\mu)-L(\mu_0))y\bigr\|_{n,p}&
\leq c_{n,p}\sum_{\ell=1}^r\bigl\|(A_{r-\ell}(\cdot,\mu)- A_{r-\ell}(\cdot, \mu_0))
\bigr\|_{n,p}\,\|y\|_{n,p}
\\
&\leq c_{n,p}\sum_{\ell=1}^r\bigl\|(A_{r-\ell}(\cdot,\mu)- A_{r-\ell}(\cdot, \mu_0))\bigr\|_{n,p}\,\|y\|_{n+r,p}
\end{align*}
because $W^{n}_p$ is a Banach algebra.
If $n=0$, then for every $y\in (W^{r}_p)^{m}$ we estimate
\begin{align*}
\bigl\|(L(\mu)-L(\mu_0))y\bigr\|_{0,p}&\leq
\sum_{\ell=1}^r\bigl\|(A_{r-\ell}(\cdot,\mu)- A_{r-\ell}(\cdot, \mu_0))\bigr\|_{0,p}\,
\|y^{(r-\ell)}\|_{\infty}\\
&\leq c_{r,p}'\sum_{\ell=1}^r\bigl\|(A_{r-\ell}(\cdot,\mu)- A_{r-\ell}(\cdot, \mu_0))
\bigr\|_{0,p}\,\|y\|_{r,p},
\end{align*}
where $c_{r,p}'$ is the norm of the bounded embedding operator $W^{r}_p\hookrightarrow C^{(r-1)}$.
These estimates imply $(\mathrm{II})$.

Suppose $(\mathrm{III})$ is valid.  This implies
\begin{equation}\label{vusok p}
\sum\limits_{l=1}^r A_{r-l}(\cdot,\mu)Y^{(r-l)}\to
\sum\limits_{l=1}^r A_{r-l}(\cdot, \mu_0)Y^{(r-l)},
\end{equation}
in $(W^{n}_p)^{m\times m}$  for all $Y \in(W^{n+r}_p)^{m\times m}$. Putting $Y(t):=I_m$ here yields $A_{0}(\cdot,\mu)\to A_{0}(\cdot, \mu_0)$. Moreover, choosing  $Y(t):=tI_m$ in \eqref{vusok p} gives us
\begin{equation*}
A_{1}(t,\mu)+A_{0}(t,\mu)t\to A_{1}(t, \mu_0)+A_{0}(t, \mu_0)t,
\end{equation*}
which implies that $A_{1}(\cdot,\mu)\to A_{1}(\cdot, \mu_0)$, and $Y(t):=t^{2}I_m$ yields
\begin{equation*}
2A_{2}(t,\mu)+2A_{0}(t,\mu)t+A_{0}(t,\mu)t^{2}\to 2A_{2}(t, \mu_0)+2A_{0}(t, \mu_0)t+A_{0}(t, \mu_0)t^{2}
\end{equation*}
and thus $A_{2}(\cdot,\mu)\to A_{2}(\cdot, \mu_0)$. Continuing this process  proves $(\mathrm{I})$.
\end{proof}

The following result will be  needed for treating the convergence of the operators $B(\mu)$. It plays a crucial point in proving the results of Section \ref{Sec.4}.

\begin{theorem}\label{opBinSobolev}
Let $1\leq p \leq\infty$ and $1/p + 1/p'=1$, and let  $t_0 \in [a, b]$, the matrix $\big(\alpha_{s}\big)_{s=1}^{n+1-r} \subset \mathbb{C}^{rm\times rm}$ and the matrix-valued function $\Phi(\cdot)\in L_{p'}([a, b] ; \mathbb{C}^{rm\times rm})$ be given.
\begin{itemize}
\item [(1)] The operator
\begin{equation} \label{diyaBnash}
By=\sum _{s=0}^{n+r-1} \alpha_{s}\,y^{(s)}(t_0)+\int_{a}^b \Phi(t)y^{(n+r)}(t){\rm d}t, \quad y(\cdot)\in (W_{p}^{n+r})^{m},
\end{equation}
acts continuously from $(W_{p}^{n+r})^{m}$ into $\mathbb{C}^{rm}$ and its norm satisfies
\begin{equation*}\label{normaB}
\|B\|\leq \gamma\max  \limits_s \{|\alpha_{s}|\} + \|\Phi\|_{L_{p'}},
\end{equation*}
where $\gamma > 0$ is a constant independent of $\alpha_{s}$ and $\Phi(\cdot)$.
\item [(2)] If $p \neq \infty$, then every bounded operator $B\colon (W^{n+r}_p)^m\rightarrow\mathbb{C}^{rm}$ admits a unique canonical representation of the form \eqref{diyaBnash}.
\end{itemize}
\end{theorem}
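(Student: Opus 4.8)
The plan is to handle the two assertions separately, with (2) carrying the real content.

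For (1), I would estimate the two contributions to $\|By\|_{\mathbb{C}^{rm}}$ directly. The continuous embedding $(W_p^{n+r})^m\hookrightarrow (C^{n+r-1})^m$ on the bounded interval guarantees $|y^{(s)}(t_0)|\le c\,\|y\|_{n+r,p}$ for each $0\le s\le n+r-1$, with $c$ depending only on $n+r$, $p$ and $[a,b]$; summing over $s$ bounds the point-evaluation part by $\gamma\max_s|\alpha_s|\,\|y\|_{n+r,p}$. For the integral term, H\"older's inequality gives $\bigl|\int_a^b\Phi(t)y^{(n+r)}(t)\,dt\bigr|\le\|\Phi\|_{L_{p'}}\|y^{(n+r)}\|_p\le\|\Phi\|_{L_{p'}}\|y\|_{n+r,p}$. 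Adding the two estimates yields both the continuity of $B$ and the stated bound on $\|B\|$.

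The core of the theorem is the surjectivity, together with the uniqueness, in (2). The idea is to identify $(W_p^{n+r})^m$ with a product space whose dual is explicit. I would introduce the linear map
\[
\Lambda y := \bigl(y(t_0),y'(t_0),\ldots,y^{(n+r-1)}(t_0),\,y^{(n+r)}\bigr)
\]
from $(W_p^{n+r})^m$ into $\mathbb{C}^{(n+r)m}\times (L_p)^m$. The Taylor reconstruction formula
\[
y(t)=\sum_{s=0}^{n+r-1}\frac{(t-t_0)^s}{s!}\,y^{(s)}(t_0)+\frac{1}{(n+r-1)!}\int_{t_0}^t(t-\tau)^{n+r-1}y^{(n+r)}(\tau)\,d\tau
\]
shows that $\Lambda$ is a bijection and that both $\Lambda$ and $\Lambda^{-1}$ are bounded, so $\Lambda$ is a Banach-space isomorphism. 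Since $p\neq\infty$, the dual of $(L_p)^m$ is $(L_{p'})^m$, hence the dual of the product space is $\mathbb{C}^{(n+r)m}\times (L_{p'})^m$ under the natural pairing. Transporting a bounded functional through $\Lambda^{*}$ then produces the representation: any bounded $B\colon (W_p^{n+r})^m\to\mathbb{C}^{rm}$, viewed through its $rm$ scalar components, corresponds to matrices $\alpha_0,\ldots,\alpha_{n+r-1}$ together with a matrix-valued function $\Phi\in L_{p'}$, acting precisely as in \eqref{diyaBnash}. Uniqueness follows from the injectivity of $\Lambda^{*}$; concretely, if the right-hand side of \eqref{diyaBnash} vanishes for every $y$, then testing against vector polynomials of degree $\le n+r-1$ (for which $y^{(n+r)}\equiv 0$) forces each $\alpha_s=0$, and afterwards testing over all $y$ whose jet at $t_0$ vanishes while $y^{(n+r)}=g$ ranges through $(L_p)^m$ forces $\int_a^b\Phi g\,dt=0$ for every such $g$, whence $\Phi=0$ almost everywhere by the $L_p$--$L_{p'}$ duality.

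The main obstacle is (2), and specifically the verification that $\Lambda$ is an isomorphism with bounded inverse; once this is in place, the identification of the dual and the read-off of the canonical form are routine. It is here that the hypothesis $p\neq\infty$ enters decisively, since $(L_\infty)^{*}\neq L_1$ and the canonical form may then fail to exist for a general functional. Some additional bookkeeping of the matrix sizes is needed when passing from the scalar Riesz-type representation to the vector operator $B$ on $(W_p^{n+r})^m$, but this is purely organizational.
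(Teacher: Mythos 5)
Your proof is correct, and part (1) coincides with the paper's argument verbatim (embedding into $C^{(n+r-1)}$ plus H\"older). For part (2) you take a mildly different route to the same destination. The paper splits $f$ by Taylor's formula at $t_0$ into a polynomial part plus a remainder $g$ in the subspace $(W^{n+r}_{p,0})^m$ of functions with vanishing jet at $t_0$, then invokes a factorization ("Triple") lemma to write the restriction of $B$ to that subspace as $C\circ A$ with $Af=f^{(n+r)}$, and finally applies the Riesz representation theorem to $C$ on $(L_p)^{m}$ to produce $\Phi$; the matrices $\alpha_s$ are read off from the action of $B$ on the monomials $(t-t_0)^s e_j$. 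You instead package the same Taylor decomposition as a single Banach-space isomorphism $\Lambda y=(y(t_0),\ldots,y^{(n+r-1)}(t_0),y^{(n+r)})$ onto $\mathbb{C}^{(n+r)m}\times(L_p)^m$ and transport $B$ through $\Lambda^{*}$, using that the dual of the product is $\mathbb{C}^{(n+r)m}\times(L_{p'})^m$ when $p<\infty$. The two arguments are mathematically equivalent --- your verification that $\Lambda$ has a bounded inverse is exactly what makes the paper's operator $A$ invertible on the vanishing-jet subspace, so the Triple lemma there reduces to $C=\widetilde B A^{-1}$ --- but your version is arguably cleaner in that it localizes the role of $p\neq\infty$ in a single duality statement and makes the uniqueness an instance of injectivity of $\Lambda^{*}$. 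The uniqueness arguments (test against polynomials of degree $\leq n+r-1$ to kill the $\alpha_s$, then against functions with vanishing jet and prescribed top derivative to kill $\Phi$ via $L_p$--$L_{p'}$ duality) are identical in both proofs. One small point to keep straight, as you note yourself: the coefficient matrices are $rm\times m$ (the paper's displayed size $rm\times rm$ for $\alpha_s$ is a slip), and the matrix case of the Riesz step is reduced to the scalar one coordinatewise.
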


It should be noted that in the case of $p =\infty$, not all operators $B$ can be presented in the form
\eqref{diyaBnash}, since there are continuous operators $B$ that are defined by integrals over finitely additive
measures (see, for instance, \cite{Bhaskara,Dunford,KantAk1982}).

\begin{proof}[Proof of Theorem \ref{opBinSobolev}]
Let us prove claim $(1)$. From the continuity of the embedding $W^{n+r}_p\hookrightarrow C^{(n+r-1)}$ it
follows that there exists a constant $\gamma > 0$ such that
$
\|y\|_{C^{(n+r-1)}}\leq \gamma \|y\|_{n+r,p}.
$
Moreover, denoting by $\Vert \cdot \Vert_\infty$ the usual sup-norm,
$$
\left\vert  \sum_{s=0}^{n+r-1}\alpha_s y^{(s)}(t_0) \right \vert
\leq  
\sum_{s=0}^{n+r-1}\big|\alpha_s \big| \big\|y^{(s)} \big\|_\infty
\leq \max \limits_s \{\big|\alpha_s \big|\}\big\|y\big\|_{C^{(n+r-1)}}\leq \gamma \big\|y\big\|_{n+r, p}.
$$
By H\"{o}lder's inequality
$$
\left| \int_{a}^b \Phi(t)y^{(n+r)}(t){\rm d}t \right| \leq \|\Phi\|_{L_{p'}} \left\|y^{(n+r)}\right\|_{L_{p}} \leq \|\Phi\|_{L_{p'}} \left\|y\right\|_{n+r, p}.
$$
Hence, the norm of the operator $B$ admits the estimate
$$
\|B\|\leq \gamma \max \limits_s\{\big|\alpha_s \big|\}+\|\Phi\|_{L_{p'}}.
$$

To show $(2)$, we recall the so-called Triple lemma (e.g. \cite[Chapter IV, Section 5]{KolmFom}):   if $E$, $E_1$,
$E_2$ are  Banach spaces and the continuous linear operators $B \colon E \rightarrow E_1$ and
$A \colon E\rightarrow E_2$  are  such that
$$
AE=E_2, \quad \ker B \supset \ker A,
$$
then there exists a linear operator $C \colon E_2 \rightarrow E_1$ such that $B=CA$.

Let $B\colon (W^{n+r}_{p})^m\rightarrow\mathbb{C}^{rm}$ be an arbitrary continuous linear operator.
Denote by $(W^{n+r}_{p,0})^{m}$ the space of vector-valued functions $f \in (W^{n+r}_{p})^{m}$ such that $f^{(s)}(t_0)=0$ for all  $s\in \{0, \ldots, n+r-1 \}$. Let us introduce the operator $A \colon (W^{n+r}_{p,0})^{m}\rightarrow (L_{p})^{rm}$, $A f:=f^{(n+r)}$.
In the triple lemma, we choose $E:=(W^{n+r}_{p,0})^m$, $E_1:=\mathbb{C}^{rm}$, $E_2:=(L_{p})^{rm}$,
and $\widetilde B:= B \mid (W^{n+r}_{p,0})^m$, which is the restriction of the operator $B$ to the space $(W^{n+r}_{p,0})^m$. According to the lemma, there exists a continuous linear operator
$C \colon (L_{p})^{rm}\rightarrow\mathbb{C}^{rm}$ such that $\widetilde Bg=CAg$ for every
$g \in (W^{n+r}_{p,0})^m$. By the Riesz representation theorem, 
there exists a unique
$\Phi \in L_q^{rm}$ such that $C h = \int_a^b \Phi(t) h(t)$ for all $h \in (L_p)^{rm}$. We obtain
\begin{equation*}
\widetilde B g= C Ag=\int_{a}^b \Phi(t)g^{(n+r)}(t){\rm d}t \ \ \ \ \  \mbox{for all} \ g  \in (W^{n+r}_{p,0})^m.
\end{equation*}
Now, every  $f \in (W^{n+r}_p)^m$ has  a unique representation
$$
f(t)=\sum_{s=0}^{n+r-1}\frac{(t-t_0)^s}{s!}f^{(s)}(t_0)+g(t), \ \ \ \ \ \mbox{where} \ g \in (W^{n+r}_{p,0})^m.
$$
Denoting by $e_j$, $j=1,\ldots , rm$, the canonical basis vectors of
$\mathbb{C}^{rm}$, we define the vectors $h_{j,s} = B( (t-t_0)^s e_j) \in \mathbb{C}^{rm}$ for all
$s= 1, \ldots , n+r-1$. Then, we define for every $s$ the matrix $\alpha_s$ to be such that
\begin{equation*}
\alpha_s \Big( \sum_{j=1} x_{j,s}  e_j  \Big) = \sum_{j=1} x_{j,s}  h_{j,s} \label{JT_new0}
\end{equation*}
Hence, taking the coordinates $x_{j,s} $ to be equal to those of $f^{(s)} (t_0)$ in the basis
$\{e_j\}_{j=1,\ldots , rm}$, we obtain
\begin{equation*}
B \left( \sum_{s=0}^{n+r-1}\frac{(t-t_0)^s}{s!}f^{(s)}(t_0) \right) =
\sum_{s=0}^{n+r-1}\alpha_s f^{(s)}(t_0)
\end{equation*}
and thus
$$
Bf=\sum_{s=0}^{n+r-1}\alpha_s f^{(s)}(t_0)+\widetilde B(g) =
\sum_{s=0}^{n+r-1}\alpha_s f^{(s)}(t_0)+\int_{a}^b \Phi(t)f^{(n+r)}(t){\rm d}t.
$$

To prove the uniqueness of the canonical representation, assume that $B$ also has a representation
\begin{equation*} 
Bf=\sum _{s=0}^{n+r-1} \beta_{s}\,y^{(s)}(t_0)+\int_{a}^b \Psi(t)y^{(n+r)}(t){\rm d}t, \quad y \in (W_{p}^{n+r})^{m} .
\end{equation*}
We get
\begin{equation*}
\sum _{s=0}^{n+r-1}\left(\alpha_s- \beta_{s}\right)\,y^{(s)}(t_0) =  \int_{a}^b \left(\Psi(t)-\Phi(t)\right)y^{(n+r)}(t){\rm d}t \ \ \ \mbox{for all} \  y \in (W_{p}^{n+r})^{m}.
\end{equation*}
This relation is equivalent to the similar equality for square matrix functions
\begin{equation} 
\label{JTnew1}
\sum _{s=0}^{n+r-1}\left(\alpha_s- \beta_{s}\right)\,Y^{(s)}(t_0)
= \int_{a}^b \left(\Psi(t)-\Phi(t)\right)Y^{(n+r)}(t){\rm d}t
\end{equation}
for all $ Y\in  (W_{p}^{n+r})^{m \times m}.$
The right-hand side of \eqref{JTnew1} is identically zero (matrix), if the elements of $Y$ are
polynomials of degree at most $ n+r-1$. For all $s , \ell \in \{0, \ldots, n+r-1 \}$ we choose such
matrices, also having the property
$$
Y_\ell^{(s)}(t_0)=\delta_{\ell s}I_m
$$
with the Kronecker delta $\delta_{\ell s}$.
This is possible by applying the Taylor formula for polynomials at the point $t_0$: every
element $y_{ij}$ of the matrix $Y=\left(y_{i,j}\right)_{i,j=1}^m$ can be chosen to be a polynomial of degree
$\leq n+r-1$ such that the vector $y_{i,j}^{(s)}(t_0)$ takes a  predetermined value belonging to
$\mathbb{C}^{n+r}$.  This implies $ \alpha_s= \beta_{s}$ for all $s \in \{0, 1, \ldots, n+r-1 \}.$

We finally  prove that $\Psi(t)=\Phi(t)$ almost everywhere on $(a,b)$. Indeed, since the
left-hand side of \eqref{JTnew1} is null, we obtain 
\begin{equation}\label{formyla6}
\int_{a}^b \left(\Psi(t)-\Phi(t)\right)Z(t){\rm d}t=O_m  
\end{equation}
for all $Z \in (L_{p})^{m \times m}.$
In the scalar case, the claim follows by the standard description of the dual space of $L_p$. The matrix
case $m \geq 2$ can be reduced to the scalar case by using the $m^2$ numerical matrices
$E_{i,j} \in \mathbb{C}^{m \times m}$, $i,j=1, \ldots,m$, where the entry  in the position $(i,j)$ equals
one and other are zero. Substituting the matrix functions $Z = \varphi E_{i,j}$ with suitable
$\varphi \in L_p$ into \eqref{formyla6}, we can show that all entries of $\Psi - \Phi$ are zero, which
proves the claim.
\end{proof}

We next formulate necessary and sufficient conditions for the strong and uniform convergence of the family  $B(\mu)$. To this end, we consider the following asymptotic conditions as $\mu\to\mu_0$.

\begin{enumerate}
\renewcommand{\labelenumi}{\alph{enumi})}
\renewcommand{\theenumi}{\alph{enumi})}
\item[(a)] $\alpha_s(\mu)\rightarrow\alpha_s(\mu_0)$ in $\mathbb{C}^{rm\times rm}$ for every
$s\in\{0,\dots, n+r-1\}$;
\item[(b)]
$\left\|\Phi(\cdot,\mu)\right\|_{p'}=O(1)$;
\item[(c)] $\int\limits_a^t \Phi(\tau,\mu)d\tau\rightarrow\int\limits_a^t\Phi(\tau, \mu_0)d\tau$
in the space $\mathbb{C}^{rm\times rm}$ for all $t\in(a,b]$;
\item[(d)] $\|\Phi(\cdot,\mu)-\Phi(\cdot, \mu_0)\|_{p'}\rightarrow0$.
\end{enumerate}

It is easy to see that condition $(d)$ is stronger than conditions $(b)$ and $(c)$.

\begin{theorem}\label{strongunifB}
Let $1\leq p<\infty$. The operators $B(\mu)$ converge strongly to the operator $B(\mu_0)$
as  $\mu\to\mu_0$, if and only if conditions $(a)$, $(b)$ and $(c)$ hold. The convergence is
uniform, if and only if $(a)$ and $(d)$ are satisfied.
\end{theorem}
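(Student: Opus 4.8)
The plan is to use the canonical representation from Theorem~\ref{opBinSobolev}, writing each operator as
\begin{equation*}
B(\mu)y=\sum_{s=0}^{n+r-1}\alpha_s(\mu)\,y^{(s)}(t_0)+\int_a^b\Phi(t,\mu)y^{(n+r)}(t)\,dt,
\end{equation*}
so that convergence of $B(\mu)$ is reduced to the behaviour of the finite-dimensional part $\sum_s\alpha_s(\mu)y^{(s)}(t_0)$ and of the integral functional $y\mapsto\int_a^b\Phi(t,\mu)y^{(n+r)}(t)\,dt$ on $(L_p)^{rm}$. The point-evaluation part is genuinely finite-dimensional: since the map $y\mapsto(y(t_0),\dots,y^{(n+r-1)}(t_0))$ is a fixed bounded surjection onto $(\mathbb{C}^m)^{n+r}$, the operators $y\mapsto\sum_s\alpha_s(\mu)y^{(s)}(t_0)$ converge strongly (equivalently, uniformly, as the range of the evaluation map is finite-dimensional) if and only if $\alpha_s(\mu)\to\alpha_s(\mu_0)$ for each $s$, which is exactly condition $(a)$. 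So $(a)$ handles the polynomial part in both the strong and uniform statements, and the whole problem decouples into analysing the integral term.

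For the integral term I would first treat the \emph{uniform} case. The operator norm of $y\mapsto\int_a^b(\Phi(t,\mu)-\Phi(t,\mu_0))y^{(n+r)}(t)\,dt$ on $(W_p^{n+r})^m$ equals, up to the isometry $y\mapsto y^{(n+r)}$ onto $(L_p)^{rm}$, the norm of the functional $h\mapsto\int_a^b(\Phi(t,\mu)-\Phi(t,\mu_0))h(t)\,dt$ on $(L_p)^{rm}$; by the duality $(L_p)^*=L_{p'}$ (valid since $1\le p<\infty$) this norm is precisely $\|\Phi(\cdot,\mu)-\Phi(\cdot,\mu_0)\|_{p'}$. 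Combined with the $(a)$ analysis, this gives that uniform convergence is equivalent to $(a)$ together with $(d)$, as claimed. The estimate from Theorem~\ref{opBinSobolev}(1) provides the upper bound showing $(a)\wedge(d)\Rightarrow$ uniform convergence, and the duality computation provides the converse lower bound.

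The \emph{strong} case is the main obstacle, since strong convergence of the integral functionals on all of $(L_p)^{rm}$ is a weak-$*$ convergence statement for the kernels $\Phi(\cdot,\mu)$ in $L_{p'}$, and such weak-$*$ convergence is characterised precisely by a uniform norm bound together with convergence when tested against a dense set. Here condition $(b)$ supplies the uniform bound $\|\Phi(\cdot,\mu)\|_{p'}=O(1)$, which by the Banach--Steinhaus theorem is necessary for pointwise convergence of the functionals and, together with convergence on a dense subset, sufficient. The natural dense testing family is the indicator functions $\chi_{[a,t]}$ (equivalently step functions), and testing $\int_a^b\Phi(\tau,\mu)\chi_{[a,t]}(\tau)\,d\tau=\int_a^t\Phi(\tau,\mu)\,d\tau$ is exactly condition $(c)$. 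Thus I would prove strong convergence $\Leftrightarrow(a)\wedge(b)\wedge(c)$ by the standard ``uniform bound plus convergence on a dense set'' argument: necessity of $(b)$ from Banach--Steinhaus, necessity of $(c)$ by testing against the functions $\chi_{[a,t]}e_j$ (realised as $y^{(n+r)}$ for suitable $y\in(W_p^{n+r})^m$), and sufficiency by approximating an arbitrary $h\in(L_p)^{rm}$ by step functions, controlling the tail uniformly in $\mu$ via $(b)$ and H\"older's inequality. The delicate point will be ensuring that every $h\in(L_p)^{rm}$ arises as $y^{(n+r)}$ for some admissible $y$ and that step functions are dense in $(L_p)^{rm}$ when $p<\infty$ (which fails at $p=\infty$, explaining the hypothesis $p<\infty$); both are routine but must be stated to make the density argument rigorous.
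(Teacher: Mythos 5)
Your proposal follows essentially the same route as the paper: decompose $B(\mu)$ via the canonical representation of Theorem~\ref{opBinSobolev}, extract condition $(a)$ by testing against polynomials of degree $\leq n+r-1$ (which kill the integral term), treat the uniform case by $L_p$--$L_{p'}$ duality, and characterize strong convergence of the integral part as weak* convergence of the kernels in $L_{p'}$, i.e.\ a uniform norm bound plus convergence against indicator functions --- which is precisely the F.~Riesz criterion the paper invokes, with the same reduction of the matrix case to the scalar one. The only imprecision is calling $y\mapsto y^{(n+r)}$ an isometry onto $(L_p)^{m}$: it is a bounded surjection admitting a bounded right inverse (the $(n+r)$-fold Taylor-remainder integral based at $t_0$), and this two-sided norm equivalence, rather than an exact equality of norms, is what the duality step actually uses; the argument is unaffected.
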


\begin{proof} 
The sufficiency of conditions $(a)$ and $(d)$ for the uniform convergence of the operators $B(\mu)$
follows from the operator norm estimate for $B(\mu)$ in Theorem \ref{opBinSobolev}.

We proceed to proving that the conditions \textit{(a), (b)} and \textit{(c)} are sufficient for the strong convergence.
It is enough to show that the right-hand side of 
\begin{gather}\label{formula2}
\left\|B(\mu)y -B(\mu_0)y\right\| \leq  \sum _{s=0}^{n+r-1}\left|\alpha_s(\mu)- \alpha_s(\mu_0)\right|\left|y^{(s)}(t_0)\right| +
\\ \nonumber
\left\| \int_{a}^b \big(\Phi(t,\mu)-\Phi(t,\mu_0)\big)y^{(n+r)}(t){\rm d}t\right\|
\end{gather}
converges to zero as $\mu\to\mu_0$. Since condition $(a)$ implies this for the first term on the right-hand
side, it suffices to show that the second term also tends to 0.

To this end, the scalar case $m=1$ follows from F. Riesz's criterion for weak convergence of
linear continuous functionals on the space $L_p$, $1\leq p<\infty$ and from conditions $(b)$ and $(c)$,
because  $y^{(n+r)} \in (L_p)^m$. In the case $m\geq2$ we observe that the convergence of the
right-hand side of \eqref{formula2} to 0 is equivalent with the claim
that for each matrix-valued function $Y \in (L_{p})^{m \times m}$ there  holds
\begin{equation*} 
\left\|\int_{a}^b \big(\Phi(t, \mu) - \Phi(t, \mu_0)\big)Y(t){\rm d}t\right\|\rightarrow 0 \quad \mu \rightarrow \mu_0.
\end{equation*}
Here, we again employ the matrices $E_{i,j}$, $i,j=1, \ldots, m$, and choose $Y =E_{i,j} y$ with
an arbitrary $y \in L_p$. This and the scalar case $m=1$, treated above, allow us to  show that,
for all $i, j \in \{1, \ldots, m\}$,
\begin{equation*}\label{zbriv Phi Y}
\left\|\varphi_{i,j}(t, \mu) \right\|_{L_{p'}} = O(1) \ \  \text{and} \ \
\int_{a}^t \varphi_{i,j}(t, \mu){\rm d}t\rightarrow 0 \ \ \ \ \text{as} \  \mu \rightarrow \mu_0,
\end{equation*}
where  $\varphi_{i,j}(t, \mu) = \Phi(t, \mu) - \Phi(t, \mu_0)$.
This is equivalent to conditions \textit{(b)} and \textit{(c)} of Theorem \ref{strongunifB}.

We next show that conditions \textit{(a), (b)} and \textit{(c)} are necessary for the strong convergence.
Again, instead of assuming that the right-hand side of \eqref{formula2} converges to zero, we may as
well assume that
\begin{gather}\label{st anal Yk}
\sum _{s=0}^{n+r-1} \alpha_{s}(\mu) Y^{(s)}(t_0)+\int_{a}^b \Phi(t, \mu)Y^{(n+r)}(t){\rm d}t \rightarrow\\ %
\sum _{s=0}^{n+r-1} \alpha_{s}(\mu_0) Y^{(s)}(t_0)+\int_{a}^b \Phi(t, \mu_0)Y^{(n+r)}(t){\rm d}t \nonumber
\end{gather}
for all matrix-valued functions $Y  \in (W^{n+r}_p)^{m\times m}$ and then show that conditions \textit{(a), (b)} and \textit{(c)} hold.

Choosing $Y(t):= (t-t_0)^s I_m$ with $s=0,1,\ldots, n+r-1$ and substituting these into \eqref{st anal Yk}
yields
\begin{equation*}
\alpha_{s}(\mu)\rightarrow  \alpha_{s}(\mu_0), \quad \mu \rightarrow \mu_0.
\end{equation*}
We conclude that $(a)$ is satisfied.

To prove the remaining statements we first the scalar case $m=1$.
Since condition $(a)$ holds, the convergence in ~\eqref{st anal Yk} means that
\begin{equation*}\label{zb Phi}
\int_{a}^b \Phi(t, \mu)y^{(n+r)}(t){\rm d}t \rightarrow \int_{a}^b \Phi(t, \mu_0)y^{(n+r)}(t){\rm d}t, \quad y\in W_{p}^{n+r}.
\end{equation*}
This is equivalent to the fact that $\Phi(t, \mu) \rightarrow \Phi(t, \mu_0)$ in the weak* topology of the space
$L_{p'}$. Then, F. Riesz' theorem implies that conditions $(b)$ and $(c)$ are also satisfied.
The case  $m\geq2$ can be reduced to the scalar case by again neglecting the terms with $\alpha_s$ in
\eqref{st anal Yk} (since $(a)$ holds) and substituting the matrix-valued functions $Y(t):=E_{i,j}y^{(n+r)}$,
where  $y \in W^{n+r}_p$ arbitrary, into the remaining relation (see the end of the proof of Theorem
\ref{opBinSobolev} for the definition of $E_{i,j}$). We obtain that conditions $(b)$ and $(c)$ are satisfied even in the matrix case.

Finally, we assume  $B(\mu)$ to $B(\mu_0)$ uniformly, that is,
\begin{equation} \label{JT_new3}
\big\|B(\mu)-B(\mu_0)\big\|\rightarrow 0
\end{equation}
as $\mu \to \mu_0$. Since the uniform convergence implies the strong convergence, we know that
condition \textit{(a)}, as well as \textit{(b)} and \textit{(c)}, is satisfied. Now, if $m=1$, the convergence
\begin{equation*}\label{rivn y}
 \left\|\Phi(\cdot, \mu)-\Phi(\cdot, \mu_0)\right\|_{p'}\rightarrow 0
\end{equation*}
follows from \eqref{JT_new3}, the defining formula \eqref{diyaBnash} and condition $(a)$. The matrix case
$m \geq 2$ is again treated with the help the matrices $Y:=E_{i,j}y$. This proves that
condition $(d)$ holds.
\end{proof}

\section{Approximation by solutions of multipoint boundary-value problems} \label{Sec.4}

Let us next apply the above results to the approximation of solutions of an inhomogeneous
boundary-value
problem by solutions of a sequence of boundary-value problems with polynomial coefficients and multipoint
boundary conditions. We will consider the case $p < \infty $, since the case $p=\infty$ is
significantly different and will be omitted here.

To  formulate the statement of the problem, we consider a well-posed boundary-value problem
\begin{equation}\label{bound_pr_aaaaa}
(L_0y_0)(t):=y^{(r)}_0(t) + \sum\limits_{\ell=1}^rA_{r-\ell,0}(t)y^{(r-\ell)}_0(t)=f_0(t), \quad t\in(a,b),
\end{equation}
with inhomogeneous boundary conditions
\begin{equation}\label{bound_pr_bbbb}
B_0y_0:=\sum _{s=0}^{n+r-1} \alpha_{s,0}\,y^{(s)}_0(t_0)+\int_{a}^b \Phi_0(t)y^{(n+r)}_0(t){\rm d}t=c_0,
\end{equation}
when the matrix-valued functions $A_{r-\ell}(\cdot) \in (W_p^n)^{m\times m}$, the vector-valued function $f_0(\cdot) \in (W^n_p)^m$, the vector $c_0 \in \mathbb{C}^{rm}$, the numerical matrices $\alpha_{s,0} \in \mathbb{C}^{rm\times rm}$ and the matrix-valued function $\Phi_0(\cdot)\in L_{p'}([a, b] ; \mathbb{C}^{rm\times rm})$, $p \in [1, \infty)$, $p^{-1}+p'^{-1}=1$, are given. As we proved
in Theorem \ref{opBinSobolev}, every arbitrary inhomogeneous boundary condition for equation \eqref{bound_pr_aaaaa} admits a unique canonical representation of the form \eqref{bound_pr_bbbb}, where $t_0$ is an arbitrary fixed point of the interval $[a,b]$.

Consider simultaneously a sequence of multipoint boundary-value problems
\begin{equation}\label{bound_pr_aaaww}
(L_ky_k)(t):=y^{(r)}_k(t) + \sum\limits_{\ell=1}^rA_{r-\ell,k}(t)y^{(r-\ell)}_k(t)=f_k(t), \quad t\in(a,b),
\end{equation}
\begin{equation}\label{bound_pr_bbbww}
B_ky_k:=\sum _{s=0}^{n+r-1} \alpha_{s,k}\,y^{(s)}_k(t_0)+\sum\limits_{j=0}^{N(k)}{\beta_{j,k}
y^{(n+r-1)}_k(t_{j,k})}=c_k,  \ \ \ k=1,2,3, \ldots.
\end{equation}
Here, the elements of the matrix-valued functions $A_{r-\ell,k}$ belong to
some dense set  $\mathcal{F}$ in the space $(W_p^n)^{m\times m}$ and $G$ is a dense set in space $(W_p^n)^{m}$ and $f_k \in G$ for all $k$. Moreover, for all indices, $\alpha_{s,k}, \beta_{j,k} \in
\mathbb{C}^{m\times m}$ and the points $t_{j,k}$ belong to some dense
set $\mathcal{P}$ in the interval $[a,b]$, and we have
$$
f_k \rightarrow f_0, \quad c_k \rightarrow c_0 \ \ \ \mbox{as} \  k \rightarrow\infty.
$$

We next study the natural problem of the existence of a sequence of boundary-value problems \eqref{bound_pr_aaaww}, \eqref{bound_pr_bbbww}, whose solutions satisfy the asymptotic formula
$$
y_k \rightarrow y_0, \quad \mbox{in}\quad (W_p^{n+r})^{m}  \ \ \mbox{as} \ \rightarrow\infty.
$$
We will give a positive answer to this question, which is based on  Theorem \ref{nep v}
on the continuity of the solutions of boundary-value problems with respect to a parameter
belonging to an abstract metric space $\mathcal{M}$.

In fact, we set $\mathcal{M}=\mathbb{Z}_+$ and introduce a metric on $\mathcal{M}$ by
\[
d(n,m) = d(m,n) =
\begin{cases}
\left| \dfrac{1}{n} - \dfrac{1}{m} \right|, & n \neq 0,\, m \neq 0, \\[6pt]
\dfrac{1}{n}, & m = 0, n \in \mathbb{N},\\[6pt]
0, & n = m = 0.
\end{cases}
\]
Then, 0 is the only limit point in the metric space $\big(\mathbb{Z}_+, d\big)$ and
there holds
$$
d(0,n)\rightarrow 0 \Leftrightarrow n\rightarrow\infty.
$$

We present the main result of this section.

\begin{theorem}\label{thapro}
Assume that the homogeneous boundary-value problem has only a trivial solution. Then,
there exists a sequence of well-posed boundary-value problems of the form \eqref{bound_pr_aaaww}, \eqref{bound_pr_bbbww} with polynomial coefficients and right-hand sides such that
\begin{itemize}
\item[(i)] if $1<p<\infty$
\begin{gather*}
\big\| \left(L_k, B_k\right)^{-1}-\left(L_0, B_0\right)^{-1}\big\|\rightarrow 0, \quad \text{as} \quad k\rightarrow \infty;
\end{gather*}
\item[(ii)] if $p=1$
\begin{gather*}
\left(L_k, B_k\right)^{-1}  \stackrel{s}{\longrightarrow} \left(L_0, B_0\right)^{-1} \  \text{and} \  y_k \rightarrow y \ \text{in} \  (W_1^{n+r})^m, \quad\text{as} \ k\rightarrow \infty.
\end{gather*}
\end{itemize}
\end{theorem}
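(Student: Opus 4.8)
The plan is to realize the approximation scheme as a parameter-dependent family over the metric space $\mathcal{M}=\mathbb{Z}_+$ introduced before the statement, in which $\mu_0=0$ is the only limit point and carries the given well-posed problem \eqref{bound_pr_aaaaa}, \eqref{bound_pr_bbbb}, while $\mu=k\in\mathbb{N}$ carries the approximating problem \eqref{bound_pr_aaaww}, \eqref{bound_pr_bbbww}. Since $d(0,k)\to0$ iff $k\to\infty$, each convergence ``as $k\to\infty$'' is literally convergence ``as $\mu\to\mu_0$,'' so the whole theorem reduces to producing a family $\big(L(\cdot),B(\cdot)\big)$ on $\mathcal{M}$ satisfying Condition~(0) together with the appropriate convergence hypotheses, after which the conclusions follow from Theorem~\ref{nep v}, Theorem~\ref{eqival ym L} and Theorem~\ref{strongunifB}. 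Condition~(0) at $\mu_0$ is exactly the hypothesis that the homogeneous problem has only the trivial solution. For the differential parts, since $p<\infty$ polynomials are dense in $W_p^n$, so I approximate each $A_{r-\ell,0}$ by polynomial matrices $A_{r-\ell,k}\in\mathcal F$ with $A_{r-\ell,k}\to A_{r-\ell,0}$ in $(W_p^n)^{m\times m}$; this is Limit Condition~(I), and by Theorem~\ref{eqival ym L} it already yields $L_k\to L_0$ in the uniform operator topology. Likewise I pick $f_k\in G$ with $f_k\to f_0$ and $c_k\to c_0$.

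The essential construction is the boundary operator. Writing $u:=y^{(n+r-1)}$, so $u'=y^{(n+r)}\in L_p$, the Newton--Leibniz formula $u(t_{j,k})=u(t_0)+\int_{t_0}^{t_{j,k}}u'$ turns the multipoint functional in \eqref{bound_pr_bbbww} into the canonical form \eqref{diyaBnash}:
\begin{equation*}
\sum_{j=0}^{N(k)}\beta_{j,k}\,y^{(n+r-1)}(t_{j,k})
=\Big(\sum_{j=0}^{N(k)}\beta_{j,k}\Big)y^{(n+r-1)}(t_0)
+\int_a^b \Psi_k(t)\,y^{(n+r)}(t)\,{\rm d}t,
\end{equation*}
where $\Psi_k=\sum_j \beta_{j,k}\,\mathrm{sgn}(t_{j,k}-t_0)\,\mathbf{1}_{I_{j,k}}$ is a step function with $I_{j,k}$ the interval between $t_0$ and $t_{j,k}$. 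Thus each $B_k$ has canonical kernel $\Psi_k\in L_{p'}$ with breakpoints among the nodes $t_{j,k}\in\mathcal P$; conversely, since $\mathbf{1}_{(t_0,s_i)}-\mathbf{1}_{(t_0,s_{i-1})}=\mathbf{1}_{(s_{i-1},s_i)}$, every step function with breakpoints in $\mathcal P$ is the canonical kernel of some admissible multipoint operator. I absorb the $y^{(n+r-1)}(t_0)$ term into $\alpha_{n+r-1,k}$ and choose the remaining $\alpha_{s,k}$ so that the canonical coefficients equal $\alpha_{s,0}$, giving condition~(a) exactly. Everything then reduces to approximating $\Phi_0\in L_{p'}$ by step functions, and here the two cases split. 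For $1<p<\infty$ we have $p'<\infty$, step functions are dense in $L_{p'}$, so I take $\|\Psi_k-\Phi_0\|_{p'}\to0$, i.e.\ condition~(d), and Theorem~\ref{strongunifB} gives $B_k\to B_0$ uniformly. For $p=1$ we have $p'=\infty$ and step functions are \emph{not} dense, so I instead let $\Psi_k$ be the piecewise mean of $\Phi_0$ over a refining partition with nodes in $\mathcal P$: then $\|\Psi_k\|_\infty\le\|\Phi_0\|_\infty=O(1)$ and $\int_a^t\Psi_k\to\int_a^t\Phi_0$ for all $t$, which are precisely conditions~(b) and~(c), so Theorem~\ref{strongunifB} gives $B_k\to B_0$ strongly.

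It remains to collect the conclusions. In both cases Condition~(0) and Limit Conditions~(I),~(II) hold, so by Theorem~\ref{nep v} the approximating problems are well-posed for all large $k$ and their solutions depend continuously on the parameter. In case~(i) I have $(L_k,B_k)\to(L_0,B_0)$ uniformly; since $(L_0,B_0)$ is invertible, a Neumann-series perturbation argument gives $\|(L_k,B_k)^{-1}-(L_0,B_0)^{-1}\|\to0$. In case~(ii), $p=1$, I have strong convergence $(L_k,B_k)\to(L_0,B_0)$, hence, exactly as in the proof of Theorem~\ref{3.6.th-bound v}, $(L_k,B_k)^{-1}\stackrel{s}{\longrightarrow}(L_0,B_0)^{-1}$ with uniformly bounded norms by Banach--Steinhaus; writing $y_k-y_0=(L_k,B_k)^{-1}\big((f_k,c_k)-(f_0,c_0)\big)+\big((L_k,B_k)^{-1}-(L_0,B_0)^{-1}\big)(f_0,c_0)$ and using $(f_k,c_k)\to(f_0,c_0)$, both terms tend to $0$, so $y_k\to y_0$ in $(W_1^{n+r})^m$.

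The main obstacle is the boundary-operator approximation and specifically the $p=1$ barrier: because step functions fail to be dense in $L_\infty=L_{p'}$, one cannot force the uniform condition~(d) and must settle for the averaging construction yielding only~(b),~(c), hence only strong convergence --- this is exactly why case~(ii) is weaker than~(i). A secondary point demanding care is the bookkeeping of the canonical representation from Theorem~\ref{opBinSobolev}: one must check that the step functions produced by admissible multipoint data (nodes in $\mathcal P$, coefficients $\beta_{j,k}$) really exhaust all step functions with breakpoints in $\mathcal P$, and that the point-evaluation coefficients $\alpha_{s,k}$ can simultaneously be matched, so that conditions~(a)--(d) are genuinely attainable within the prescribed class \eqref{bound_pr_aaaww},~\eqref{bound_pr_bbbww}.
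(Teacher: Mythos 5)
Your proposal is correct, and for part (i) it follows essentially the paper's own route: polynomial approximation of the coefficients via Theorem~\ref{eqival ym L}, the canonical representation of $B_0$ from Theorem~\ref{opBinSobolev}, density of step functions in $L_{p'}$ when $p'<\infty$, and the Stieltjes/Newton--Leibniz correspondence between step-function kernels and multipoint functionals (your explicit verification that \emph{every} step function with breakpoints in $\mathcal P$ arises from admissible multipoint data is a detail the paper glosses over, and is worth having). Where you genuinely diverge is part (ii). The paper proves a separate result (Theorem~\ref{thseq}) producing step functions converging weak* in $L_\infty$ to $\Phi_0$, via Ces\`aro means of Fourier series and Banach's metrizability of the weak* topology on bounded sets, and then feeds this into Theorem~\ref{strongunifB}. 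You instead take $\Psi_k$ to be the piecewise mean of $\Phi_0$ over a refining partition with nodes in $\mathcal P$ and verify conditions $(b)$ and $(c)$ of Theorem~\ref{strongunifB} directly: $\|\Psi_k\|_\infty\leq\|\Phi_0\|_\infty$ is immediate, and $\int_a^t\Psi_k\to\int_a^t\Phi_0$ holds because the two integrals agree at partition points and differ elsewhere by at most $2\|\Phi_0\|_\infty$ times the mesh. This is more elementary and self-contained than the paper's harmonic-analysis detour, and it makes the $p=1$ obstruction (non-density of step functions in $L_\infty$, hence only strong rather than uniform convergence) transparent; the paper's route, on the other hand, isolates the weak* approximation as a reusable standalone statement. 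Your closing step deducing $y_k\to y_0$ from the strong convergence of the inverses together with $(f_k,c_k)\to(f_0,c_0)$ is also spelled out more carefully than in the paper, which leaves it implicit.
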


\begin{proof}[Proof of Theorem \ref{thapro}] We begin by proving the statement $(i)$. Since, by the assumption of the theorem, the operator $(L_0,B_0)$ is invertible, it suffices to show that there exists a sequence of differential operators $\{L_k\}$ with polynomial coefficients and a sequence of operators $\{B_k\}$ of the form \eqref{bound_pr_bbbww} with the property that
\begin{gather*}
\big\| \left(L_k, B_k\right)-\left(L_0, B_0\right)\big\|\rightarrow 0, \quad \text{as} \quad k\rightarrow \infty.
\end{gather*}
The density of algebraic polynomials in Sobolev spaces with $1\leq p<\infty$ and Theorem
\ref{eqival ym L}  imply that one can choose the coefficients  $A_{l,k}$ in
\eqref{bound_pr_aaaww} in such a way that their elements are algebraic polynomials and $\big\| L_k-L_0\big\|\rightarrow 0 $. Let us now prove the existence of a suitable sequence of operators $\left \{B_k\right \}_{k=1}^\infty$. According to Theorem \ref{opBinSobolev}, the operator $B_0$ admits a unique canonical representation
\begin{equation*}\label{diyaBnash}
B_0y=\sum _{s=0}^{n+r-1} \alpha_{s}\,y^{(s)}(t_0)+\int_{a}^b \Phi_0(t)y^{(n+r)}(t){\rm d}t, \quad y(\cdot)\in (W_{p}^{n+r})^{m},
\end{equation*}
where  $\Phi_0 \in \left(L_{p'}\right)^{rm\times rm}$. The density of continuous matrix functions
in the space $\left(L_{p'}\right)^{rm\times rm}$ and the uniform continuity of continuous functions
in a compact metric space imply that the set $\mathcal{F}$ of matrix-valued functions with step elements is dense in the space $\left(L_{p'}\right)^{rm\times rm}$. Moreover, due to the density of the set of points $\mathcal{P}$ in $[a,b]$, we may assume without loss of generality
that the ends of all steps in the step elements belonging to $\mathcal{F}$ are contained in
the set $\mathcal{P}$.

Therefore, due to Theorem \ref{strongunifB}, there exists a sequence of operators $\{B_k\}$
of the form
\begin{equation}\label{formula5}
B_ky=\sum _{s=0}^{n+r-1} \alpha_{s,k}\,y^{(s)}(t_0)+\int_{a}^b \Phi_k(t)y^{(n+r)}(t){\rm d}t,
\end{equation}
where $\Phi_k \subset \mathcal{F}$. It remains to show that operators of the form \eqref{formula5} will correspond to multipoint boundary conditions and can be rewritten in the form \eqref{bound_pr_bbbww}.

Clearly, if $m=1$, then the step function $\Phi_k$ can be written as
\begin{equation*}\label{formula6}
\Phi_k = \sum\limits_{j=0}^{N(k)}\mathcal{X}_{[t_{j,k},t_{j+1,k}]}c_{j,k},
\end{equation*}
where $c_{j,k} \in \mathbb{C}$, $a=t_0<t_1<\ldots<t_{k-1}=b$ is a partition of the interval $[a,b]$, and $\mathcal{X}_{[c,d]}$ denotes the characteristic function of an interval $[c,d]$. Therefore, a
well-known integration-by-parts formula for Stieltjes integrals yields
\begin{gather*}
\int_{a}^b \Phi_k(t)y^{(n+r)}(t){\rm d}t =
\sum\limits_{j=0}^{N(k)} y^{(n+r-1)}(t_{j,k})\triangle \Phi_k(t_{j,k}),
\end{gather*}
where  $\triangle \Phi_k(t) := \Phi_k(t+) -\Phi_k(t-)$ is the jump of the function $\Phi_k$ at the point
$t$. We obtain that $B_k$ is indeed of the form \eqref{bound_pr_bbbww}.
The case $m\geq2$ reduces to the scalar one, because the matrix-valued function
$\varphi_{i,j}$ can be written in the form $\sum\limits_{i,j=1}^{m}\varphi_{i,j}E_{i,j}$,
where the numerical matrices $E_{i,j}$ were introduced earlier. This completes the proof of statement $(i)$.

To prove the second assertion of the theorem, we will need  the following
result, which may be known for experts, but we nevertheless present the proof.

\begin{theorem}\label{thseq}
Given $f \in L_\infty(a,b)$ , there is a sequence $\{ f_k\}_{k=1}^\infty$ of
step functions on the interval $[a,b]$ which converges to $f$ in the weak* topology of
$L_\infty(a,b)$. The sequence of step functions can be chosen so that the ends of all steps belong to a given dense subset $\mathcal{P}$ of the interval $[a,b]$.
\end{theorem}

\begin{proof}[Proof of Theorem \ref{thseq}]
By an affine transform, we may consider the case
$(a,b)=(-\pi, \pi)$ without loss of generality. It is well known (see e.g.
\cite[Chapter II, Section 2]{hoffman}) that the sequence of Ces\`{a}ro means
$\{\sigma_{k}(f)\}$ of partial sums of its Fourier series converges to $f$ in the weak* topology. Therefore, the set of continuous functions is sequentially dense in the weak* topology of the space $L_\infty(-\pi, \pi)=(L_1(-\pi, \pi))^*$. However, the Banach space $L_1(-\pi, \pi)$ is separable. Then by Banach's theorem \cite{KantAk1982}, the weak* topology is metrized on bounded subsets of the space $L_\infty(-\pi, \pi)$. Hence, there exists a metric $\rho(\cdot, \cdot)$ defined in some neighborhood of an arbitrary function of the same space such that
$$
\sigma_{k}(t)\stackrel{w^*}{\longrightarrow} f \quad \mbox{in}\quad L_\infty(-\pi, \pi)\Leftrightarrow \rho(\sigma_{k}(t),f)\rightarrow 0, \quad k\rightarrow\infty.
$$

Let $\varepsilon>0$ be chosen arbitrarily. Then there exists $N(\varepsilon) \in \mathbb{N}$ so that
$$
\forall k\geq N(\varepsilon) \quad \rho\big(\sigma_{k}(f),f\big)<\varepsilon/2.
$$
Since continuous functions $\{\sigma_{k}(f)\}$ can be approximated with arbitrary accuracy in the uniform norm by step functions, there exists a sequence of step functions $\left\{f_k\right\}$ that uniformly converges to the function $\sigma_{k}(f)$ on the interval $[-\pi, \pi]$. The step functions can be chosen such that
the ends of the steps belong to the given set $\mathcal{P}$. The sequence can
also be chosen with the property that
$$
\forall k\geq N(\varepsilon) \quad \rho\big(\sigma_{k}(f),f_k\big)<\varepsilon/2.
$$
Then $\rho\big(\sigma_{k}(f),f_k\big)<\varepsilon$ and the sequence of step functions $f_k \stackrel{w^*}{\longrightarrow} f_0$, as $k\rightarrow\infty$.
\end{proof}

Let us complete the proof of the assertion $(ii)$ of Theorem \ref{thapro}. Consider first the scalar
case of that $m=1$. Then, the statement follows from Theorem \ref{opBinSobolev} and
Proposition \ref{thseq}, by taking into account that limit operators of the form
\eqref{bound_pr_bbbww}  correspond to step functions $\Phi_k(\cdot)$.

The case $m\geq2$ reduces to the scalar one in the same way as in the proof of assertion $(i)$.
\end{proof}

In the case of $p=1$, there arises a natural question:   what conditions
should be imposed in Theorem \ref{thapro} on the operator $\left(L_0, B_0\right)$ in order
to assure that  $\left(L_k, B_k\right) \stackrel{s}{\longrightarrow} \left(L_0, B_0\right)$
in the uniform operator topology, in addition to the strong convergence? Note that this is
always true in the case  $1<p<\infty$. The answer to the question is given by the following theorem.

\begin{theorem}\label{thapro2}
Let the assumptions of Theorem \ref{thapro} be satisfied and $p=1$.  Then, there holds \begin{equation}\label{sulzb}
\big\| \left(L_k, B_k\right)^{-1}-\left(L_0, B_0\right)^{-1}\big\|\rightarrow 0, \quad \text{as} \quad k\rightarrow \infty,
\end{equation}
if and only if each entry of the matrix-valued function $\Phi_0$ is equal to a regulated function
almost everywhere.
\end{theorem}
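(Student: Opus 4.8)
The plan is to convert the uniform convergence of the inverses into the uniform convergence of the operators $(L_k,B_k)$ themselves, and then to read off the latter from the convergence conditions of Theorem \ref{strongunifB}, which for $p=1$ (so that $p'=\infty$) reduce the matter to approximating $\Phi_0$ by step functions in the $L_\infty$-norm. First I would use that, by the hypothesis inherited from Theorem \ref{thapro}, the operator $(L_0,B_0)$ is invertible, so inversion is a homeomorphism of the set of invertible operators in the uniform operator topology; hence \eqref{sulzb} is equivalent to $\|(L_k,B_k)-(L_0,B_0)\|\to 0$. Since polynomial coefficients are dense in $(W_1^n)^{m\times m}$ and Theorem \ref{eqival ym L} turns this density into $\|L_k-L_0\|\to 0$, and since the two components of $(L_k,B_k)$ decouple in the product norm on $(W_1^n)^m\times\mathbb{C}^{rm}$, the whole statement reduces to the existence of a sequence $\{B_k\}$ of multipoint operators \eqref{bound_pr_bbbww} with $\|B_k-B_0\|\to 0$.

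Next I would appeal to Theorem \ref{strongunifB}: for $p=1$ one has $\|B_k-B_0\|\to 0$ if and only if $\alpha_{s,k}\to\alpha_{s,0}$ (condition $(a)$, always arrangeable) together with $\|\Phi_k-\Phi_0\|_{L_\infty}\to 0$ (condition $(d)$). As already recorded in the proof of Theorem \ref{thapro}, every multipoint operator \eqref{bound_pr_bbbww} has the canonical representation \eqref{formula5} with $\Phi_k$ a step function whose jumps lie in $\mathcal{P}$, and conversely any such step function yields, via the Stieltjes integration-by-parts formula, an operator of the form \eqref{bound_pr_bbbww}. Thus the theorem is equivalent to the purely function-theoretic assertion that $\Phi_0\in (L_\infty)^{rm\times rm}$ is an $L_\infty$-limit of step functions if and only if each of its entries coincides almost everywhere with a regulated function.

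The necessity direction is then immediate: if \eqref{sulzb} holds, condition $(d)$ forces the step functions $\Phi_k$ to converge to $\Phi_0$ essentially uniformly, and an essentially uniform limit of step functions is a.e. equal to a regulated function, by the classical characterization of regulated functions as the uniform closure of the step functions. For sufficiency I would use the converse half of the same classical fact: if every entry of $\Phi_0$ is a.e. regulated, then $\Phi_0$ is an essentially uniform limit of step functions, whence $(d)$, hence $\|B_k-B_0\|\to 0$ and, together with $\|L_k-L_0\|\to 0$, the conclusion \eqref{sulzb}. The matrix case $m\ge 2$ reduces entrywise to the scalar one through the matrices $E_{i,j}$, exactly as in the proofs of Theorems \ref{opBinSobolev} and \ref{strongunifB}.

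The genuinely delicate point, and the main obstacle, is to realize the approximating step functions with all their jumps inside the prescribed dense set $\mathcal{P}$, for this is far more rigid in the $L_\infty$-norm than in $L_1$: at a jump of $\Phi_0$ of height $\ge\varepsilon$ the approximating step function must jump essentially at that very point, since displacing the jump by even a set of small positive measure spoils the essential-supremum estimate. I would resolve this by exploiting that a regulated function has at most countably many discontinuities: adjoining this countable set to $\mathcal{P}$ keeps $\mathcal{P}$ dense, while on each interval complementary to the finitely many jumps of height $\ge\varepsilon$ the oscillation of $\Phi_0$ is $<\varepsilon$ on a sufficiently fine partition whose remaining nodes may be taken freely from $\mathcal{P}$. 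With the jumps so accommodated, condition $(d)$ is secured and the argument closes.
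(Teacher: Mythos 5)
Your proposal follows essentially the same route as the paper: reduce \eqref{sulzb} to $\|L_k-L_0\|\to 0$ and $\|B_k-B_0\|\to 0$ via the invertibility of $(L_0,B_0)$, translate the latter through condition $(d)$ of Theorem \ref{strongunifB} into $L_{\infty}$-approximation of $\Phi_0$ by the step functions $\Phi_k$, and conclude with the classical fact that the uniform closure of the step functions on a compact interval is the set of regulated functions. Your closing discussion of forcing the jumps of the approximating step functions into the dense set $\mathcal{P}$ is a genuine refinement: the paper's proof simply cites the Bourbaki characterization without confronting the rigidity of $L_{\infty}$-approximation at discontinuities of $\Phi_0$ lying outside $\mathcal{P}$, whereas you correctly identify that such jumps must be matched exactly and resolve this by using the countability of the discontinuity set of a regulated function.
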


Recall that a function on an interval $[a,b]$ is called regulated if it has finite one-sided
limits at every point of the interval (see, for example, \cite{Bourbaki}).

\begin{proof}[Proof of Theorem \ref{thapro2}] Since  the operator $\left(L_0, B_0\right)$ is invertible by assumption, condition \eqref{sulzb} is equivalent to
\begin{gather*}
\big\| L_k-L_0\big\|\rightarrow 0 \quad \text{and} \quad \big\| B_k-B_0\big\|\rightarrow 0, \quad k\rightarrow \infty.
\end{gather*}
Due to  Theorems \ref{eqival ym L} and \ref{strongunifB}, this is equivalent to the fact that
condition $(I)$ of Theorem \ref{eqival ym L} holds, and conditions $(a)$, $(d)$ of Theorem \ref{strongunifB} hold. Setting $\alpha_{s,k}:=\alpha_{s,0}$, condition $(d)$ means that each entry of the matrix-function $\Phi_k(\cdot)$ converges to the corresponding entry of the matrix-function $\Phi_0(\cdot)$ in the norm of the space $L_\infty([a,b])$. The operators $B_k$, introduced in this section, correspond to the matrix-functions $\Phi_k(\cdot)$, the elements of which are step functions. However, as is known (see, for instance,  \cite{Bourbaki}), the closure of the set of step functions
 in the uniform metric on the compact interval coincides with the set of regulated functions. This implies the validity of the assertion of Theorem \ref{thapro2}.
\end{proof}

\section{Acknowledgment}

The work of the first named author was funded by Postdoctoral Fellowship EU-MSCA4Ukraine (number: 1244691,
WBS-number: 4100609). This project has received funding through the MSCA4Ukraine project, which is funded by
the European Union. Views and opinions expressed are however those of the author only and do not necessarily
reflect those of the European Union, the European Research Executive Agency or the MSCA4Ukraine Consortium.

Neither the European Union nor the European Research  Executive Agency, nor the MSCA4 Ukraine Consortium as
whole nor any individual member institution of the MSCA4Ukraine Consortium can be held responsible for them.


The work of the second named author was funded by the Isaac Newton Institute of Mathematical Sciences "Solidarity Program", and the London Mathematical Society. The author wishes to thank the Department of Mathematics, King's College London, for their hospitality.


\begin{thebibliography}{33}

\bibitem{Adomian}
Adomian G. Solving frontier problems of physics: the decomposition method. Springer: Dordrecht, The Netherlands, 1994. doi:10.1007/978-94-015-8289-6

\bibitem{Ashordia97}
Ashordia M. Boundary value problems for systems of linear ordinary differential equations. Masaryk University, Brno, 1997.

\bibitem{Ashordia98}
Ashordia M. \emph{Conditions of the existence and uniqueness of solutions of the multipoint boundary value problem for a system of generalized ordinary differential equations}. Georgian Math. J. 1998, \textbf{5} (1), 1--24. doi:10.1515/GMJ.1998.1

\bibitem{Ashordia}
Ashordia M. \emph{Criteria of correctness of linear boundary value problems for systems of generalized ordinary differential equations}. Czechoslovak Math. J. 1996, \textbf{46} (3), 385--404.

\bibitem{Ashordia05}
Ashordia M. \emph{On the general and multipoint boundary value problems for linear systems of generalized ordinary differential equations, linear impulse and linear difference systems}. Mem. Differ. Equ. Math. Phys. 2005, \textbf{36},  1--80.

\bibitem{Atlasiuk20201}
Atlasiuk O.M. \emph{Limit theorems for solutions of multipoint boundary-value problems in Sobolev spaces}. J. Math. Sci. 2020, \textbf{247} (2), 238--247. doi:10.1007/s10958-020-04799-w

\bibitem{Atlasiuk2020}
Atlasiuk O.M. \emph{Limit theorems for solutions of multipoint boundary-value problems with a parameter in Sobolev spaces}. Ukrain. Math. J. 2021, \textbf{72} (8), 1175--1184. doi:10.1007/s11253-020-01859-x

\bibitem{AtlasiukMikhailets20191}
Atlasiuk O.M., Mikhailets V.A. \emph{Fredholm one-dimensional boundary-value problems in Sobolev spaces}. Ukrain. Math. J. 2019, \textbf{70} (10), 1526--1537. doi:10.1007/s11253-019-01588-w

\bibitem{AtlasiukMikhailets20192}
Atlasiuk O.M., Mikhailets V.A. \emph{Fredholm one-dimensional boundary-value problems with parameter in Sobolev spaces}. Ukrain. Math. J. 2019, \textbf{70} (11), 1677--1687. doi:10.1007/s11253-019-01599-7

\bibitem{Bhaskara}
Bhaskara Rao K.P.S., Bhaskara Rao M. Theory of Charges: A Study of Finitely Additive Measures. Academic Press, London, 1983.


\bibitem{Bourbaki}
Bourbaki N., Spain P. Functions of a real variable: Elementary theory. Springer, Berlin–Heidelberg–New York, 2004.

\bibitem{Cartan1971}
Cartan A. Differential calculus. Differential forms. Mir, Moscow, 1971. (in Russian)

\bibitem{Chua}
Chua S.-K. \emph{Average value problems in ordinary differential equations}. J. Differ. Equ. 2010, \textbf{249} (7), 1531--1548. doi:10.1016/j.jde.2010.07.011

\bibitem{Dunford}
Dunford N., Schwartz J.T. Linear Operators. Part I: General Theory. Interscience Publishers, New York–London, 1958.

\bibitem{Gikhman}
Gikhman I.I. \emph{Concerning a theorem of N.N.~Bogolyubov}. Ukr. Mat. Zh. 1952 \textbf{4} (2), 215--219.

\bibitem{HRYPKodliukMikhailets2015}
Gnyp E.V., Kodlyuk T.I., Mikhailets V.A. \emph{Fredholm boundary-value problems with parameter in Sobolev spaces}. Ukrain. Math. J. 2015, \textbf{67} (5), 658--667. doi:10.1007/s11253-015-1105-1

\bibitem{Goodrich}
Goodrich C., Lizama C. \emph{Existence and monotonicity of nonlocal boundary value problems: The one-dimensional case}. Proc. R. Soc. Edinb., Sect. A, Math. 2022, \textbf{152} (1), 1--27. doi:10.1017/prm.2020.90

\bibitem{HRYP}
Hnyp E.V. \emph{Continuity of the solutions of one-dimensional boundary-value problems with respect to the parameter in the Slobodetskii spaces}. Ukrain. Math. J. 2016, \textbf{68} (6), 846--861. doi:10.1007/s11253-016-1261-y

\bibitem{HRYPMikhailetsMurach2017}
Hnyp Y., Mikhailets V., Murach A. \emph{Parameter-dependent one-dimensional boundary-value problems in Sobolev spaces}. Electr. J. Differ. Equa. 2017, \textbf{2017} (81), 1--13.

\bibitem{hoffman}
Hoffman K. Banach spaces of analytic functions. Dover Publications Inc., New York, 2007.

\bibitem{Horyunov}
Horyunov A.S. \emph{Convergence and approximation of the Sturm--Liouville operators with potentials-distributions}. Ukrain. Math. J. 2015, \textbf{67} (5), 680--689. doi:10.1007/s11253-015-1107-z

\bibitem{Hermander1985}
H\"ormander L. The analysis of linear partial differential operators. III: Pseudo-differential operators. Springer-Verlag, Berlin, Heidelberg, 2007. doi: 10.1007/978-3-540-49938-1

\bibitem{Iha}
Iha F.T. \emph{On boundary functionals and operators with finite-dimensional null spaces}. Pac. J. Math. 1975, \textbf{56} (2), 517--524. doi:10.2140/pjm.1975.56.517

\bibitem{KantAk1982}
Kantorovich L.V., Akilov G.P.Functional analysis. 2nd Edition. Pergamon Press, Oxford, 1982.	

\bibitem{Puza}
Kiguradze I., P\.{u}\v{z}a B. Boundary value problems for systems of linear functional differential equations. Masaryk University, Brno, 2003.

\bibitem{Kigyradze1987}
Kiguradze I.T. \emph{Boundary-value problems for systems of ordinary differential equations}. J. Soviet Math. 1988, \textbf{43} (2), 2259--2339. doi:10.1007/BF01100360


\bibitem{Kigyradze2003}
Kiguradze I.T. \emph{On boundary-value problems for linear differential systems with singularities}. Differ. Equ. 2003, \textbf{39} (2), 212--225.

\bibitem{Kigyradze1975}
Kiguradze I.T. Some singular boundary-value problems for ordinary differential equations. Tbilisi University, Tbilisi, 1975. (in Russian)	

\bibitem{KodliukMikhailetsReva2013}
Kodlyuk T.I., Mikhailets V.A., Reva N.V., \emph{Limit theorems for one-dimensional boundary-value problems}. Ukrain. Math. J. 2013, \textbf{65} (1), 77--90. doi:10.1007/s11253-013-0766-x

\bibitem{KodliukMikhailets2013}
Kodlyuk T.I., Mikhailets V.A. \emph{Solutions of one-dimensional boundary-value problems with a parameter in Sobolev spaces}. J. Math. Sci. 2013, \textbf{190} (4), 589--599. doi:10.1007/s10958-013-1272-2

\bibitem{KolmFom}
Kolmogorov A.N., Fomin S.V. Elements of the theory of functions and functional analysis. Dover Publications Inc., New York, 1999.	

\bibitem{Krall68}
Krall A.M. \emph{Differential operators and their adjoints under integral and multiple point boundary conditions}. J. Differ. Equ. 1968, \textbf{4}, 327--336. doi:10.1016/0022-0396(68)90019-3

\bibitem{Krall}
Krall A.M. \emph{The development of general differential and general differential-boundary systems}. Rocky Mt. J. Math. 1975, \textbf{5} (4), 493--542. doi:10.1216/RMJ-1975-5-4-493

\bibitem{KrasnKrein}
Krasnoselskii M.A., Krein S.G. \emph{On the principle of averaging in nonlinear mechanics}. Uspekhi Mat. Nauk 1955, \textbf{10} (3), 147--153. (in Russian)

\bibitem{Krein82}
Krein S.G. Linear equations in Banach spaces. Birkh\"{a}user: Basel, Switzerland, 1982. doi:10.1007/978-1-4684-8068-9

\bibitem{KurzweilVorel}
Kurzweil J., Vorel Z. \emph{Continuous dependence of solutions of differential equations on a parameter}. Uspekhi Mat. Nauk 1957, \textbf{7} (4), 568--583. (in Russian)

\bibitem{Levin}
Levin A.Yu. \emph{The limiting transition for nonsingular systems}. Dokl. Akad. Nauk SSSR 1967, \textbf{176} (4), 774--777.

\bibitem{Ma}
Ma R. \emph{A survey on nonlocal boundary value problems}. Appl. Math. E-Notes 2007, \textbf{7}, 257--279.

\bibitem{Masliuk}
Masliuk H., Pelekhata O., Soldatov V. \emph{Approximation properties of multipoint boundary-value problems}. Methods Funct. Anal. Topology. 2020, \textbf{26} (2), 119--125. doi:10.31392/mfat-npu26\_2.2020.04

\bibitem{Masliuk18}
Masliuk H., Soldatov V. \emph{One-dimensional parameter-dependent boundary-value problems in H\"{o}lder spaces}. Methods Funct. Anal. Topology, 2018, \textbf{24} (2), 143--151.

\bibitem{Masliuk17}
Maslyuk H.O. \emph{Continuity of the solutions of one-dimensional boundary-value problems in H\"{o}lder spaces with respect to the parameter}. Ukrainian Math. J., 2017, \textbf{69} (1), 101--110. doi:10.1007/s11253-017-1349-z


\bibitem{AtlasiukMikhailets2025}
Mikhailets V.A., Atlasiuk O.M. \emph{Continuity in a parameter of solutions to boundary-value problems in Sobolev spaces}. Carpathian Math. Publ. 2025, \textbf{17} (2), 433--446. doi:10.15330/cmp.17.2.433-446

\bibitem{AtlasiukMikhailets2024}
Mikhailets V.A., Atlasiuk O.M. \emph{Differential systems in Sobolev spaces with generic inhomogeneous boundary conditions}. Carpathian Math. Publ. 2024, \textbf{16} (2), 523--538. doi:10.15330/cmp.16.2.523-538

\bibitem{NovAM2023}
Mikhailets V., Atlasiuk O. \emph{The solvability of inhomogeneous boundary-value problems in Sobolev spaces}. Banach J. Math. Anal. 2024, \textbf{18(2)} (12). doi:10.1007/s43037-023-00316-8

\bibitem{MikhMurachSol2016}
Mikhailets V.A., Murach A.A., Soldatov V.O. \emph{Continuity in a parameter of solutions to generic boundary-value problems}. Electron. J. Qual. Theory Differ. Equ. 2016, \textbf{2016} (A87), 1--16. doi:10.14232/ejqtde.2016.1.87

\bibitem{Pelekhata18}
Mikhailets V.A., Pelekhata O.B., Reva N.V. \emph{Limit theorems for the solutions of boundary-value problems}. Ukrainian Math. J. 2018, \textbf{70} (2), 243--251. doi:10.1007/s11253-018-1498-8

\bibitem{Murach21}
Murach A.A., Pelekhata O.B., Soldatov V.O. \emph{Approximation properties of solutions to multipoint boundary-value problems}. Ukrainian Math. J. 2021, \textbf{73} (3), 399--413. doi:10.1007/s11253-021-01951-w

\bibitem{Nguen}
Nguen T.K. \emph{On the dependence of a solution to a linear system of differential equations on a parameter}. Differ. Equa. 1993, \textbf{29} (6), 830--835.

\bibitem{Opial}
Opial Z. \emph{Continuous parameter dependence in linear systems of differential equations}.
J. Differ. Equa. 1967, \textbf{3} (4), 571--579.

\bibitem{Paukstaite}Pauk$\check{\mbox{s}}$tait\.e, G., $\check{\mbox{S}}$tikonas, A.\emph{Green's matrices for first order differential systems with nonlocal conditions}. Math. Model. Anal. 2017, \textbf{22} (2), 213--227. doi:10.3846/13926292.2017.1291456

\bibitem{Pelekhata19}
Pelekhata O.B., Reva N.V. \emph{Limit theorems for the solutions of linear boundary-value problems for systems of differential equations}. Ukrain. Math. J. 2019, \textbf{71} (7), 1061--1070. doi:10.1007/s11253-019-01698-5

\bibitem{Providas}
Providas E., Zaoutsos S., Faraslis I. \emph{Closed-form solutions of
linear ordinary differential equations with general boundary
conditions}. Axioms 2021, \textbf{10} (226), 1--12. doi:10.3390/axioms10030226

\bibitem{Reid}
Reid W.T. \emph{Some limit theorems for ordinary differential systems}. J. Differ. Equa. 1967, \textbf{3} (3), 423--439.

\bibitem{Skubachevskii}
Skubachevskii A.L. \emph{Nonclassical boundary-value problems}. Int. J. Math. Sci. 2008, \textbf{155}, 199--334. doi:10.1007/s10958-008-9218-9

\bibitem{Smogorzhevsky}
Smogorzhevsky O. \emph{Les fontions de Green des syst\`{e}mes differentielles lin\`{e}aires dans un domaine \`{a} une seule dimension}. Mathematical Collection 1940, \textbf{7} (49), 179--196.

\bibitem{Sold2015}
Soldatov V.O. \emph{On the continuity in a parameter for the solutions of boundary-value problems total with respect to the spaces $C^{(n+r)}[a, b]$}. Ukrainian Math. J. 2015, \textbf{67} (5), 785--794. doi: 10.1007/s11253-015-1114-0

\bibitem{Whyburn}
Whyburn W.M. \emph{Differential equations with general boundary conditions}. Bull. Amer. Math. Soc. 1942, \textbf{48} (10), 692--704. doi:10.1090/S0002-9904-1942-07760-3

\end{thebibliography}
\end{document}